\def\RR{\mathbb{R}}
\def\CC{\mathbb{C}}
\def\NN{\mathbb{N}}
\def\fin{\mathrm{fin}}
\def\z{\mathcal{Z}}
\def\x{\mathcal{X}}
\def\A{A}
\def\P{\mathcal{P}}
\newtheorem{lemma}{Lemma}
\newtheorem{thm}{Theorem}
\newtheorem*{citethm}{Theorem}
\newtheorem{cor}{Corollary}
\theoremstyle{definition}
\newtheorem*{remark}{Remark}
\DeclareMathOperator{\vol}{vol} \DeclareMathOperator{\pos}{Pos}
\begin{document}
\title{On the real multidimensional \\ rational $K$-moment problem}
\author{Jaka Cimpri\v c}
\address{\noindent
Cimpri\v c, Jaka,
Faculty of Mathematics and Physics, \newline \indent
University of Ljubljana,
Jadranska 21, SI-1000 Ljubljana, Slovenija, \newline \indent
email: cimpric@fmf.uni-lj.si}
\author{Murray Marshall}
\address{
Marshall, Murray,
Department of Mathematics and Statistics,\newline \indent
University of Saskatchewan,
Saskatoon, SK S7N 5E6, Canada,\newline \indent
email: marshall@math.usask.ca}
\author{Tim Netzer}
\address{
Netzer, Tim,
Fachbereich Mathematik und Informatik, \newline \indent
Universit\"{a}t Leipzig,
D- 04009 Leipzig,
Germany, \newline \indent
e-mail tim.netzer@math.uni-leipzig.de}

%\thanks {This research was funded in part by an NSERC Discovery Grant.}
\subjclass[2000]{44A60, 14P99}
\keywords{moment problem, positive polynomials, sums of squares.}
\date{July 30, 2008 and, in revised form, October 9th, 2009.}

\begin{abstract}
We present a solution to the real multidimensional rational
$K$-moment problem, where $K$ is defined by finitely many
polynomial inequalities. More precisely, let $S$ be a finite set
of real polynomials in $\underline{X}=(X_1,\ldots,X_n)$ such that
the corresponding basic closed semialgebraic set $K_S$ is
nonempty. Let $E=D^{-1}\RR[\underline{X}]$ be a localization of
the real polynomial algebra,  and $T_S^E$ the preordering on $E$
generated by $S$. We show that every linear functional $L$ on $E$
such that $L(T_S^E) \ge 0$ is represented by a positive measure
$\mu$ on a certain subset of $K_S$, provided $D$ contains an
element that grows fast enough on $K_S$.
\end{abstract}

\maketitle

\section{Introduction}

The \textit{moment problem} for a commutative unital $\RR$-algebra
$E$ asks to characterize real positive linear functionals on $E$
which can be represented as integrals over measures on an appropriate
representation space of $E$. If supports of the measures
are required to lie in a prescribed subset $K$ of the representation
space then we talk about the $K$-\textit{moment problem} on $E$.

A solution to the $K$-moment problem on
$\RR[\underline{X}]=\RR[X_1,\ldots,X_n]$ for a compact basic
closed semialgebraic set $K$ was given  by K. Schm\"udgen in
\cite{kon}. The aim of this paper is to extend his result, both in
the compact and non-compact case, to localizations of the
polynomial algebra, i.e. to algebras of the form
$E=D^{-1}\RR[\underline{X}]$, where $D$ is a multiplicative set.
The case $n=1$, $D$ generated by $X_1-\alpha$ for countably many
real $\alpha$, and $K$ a compact basic closed semialgebraic set
has already been done by J. D. Chandler in \cite{jdc}. 
Several papers deal with the case $n=1$, $D$ generated by $X_1-\alpha$ for
countably many real $\alpha$ and $K=\RR$, see \cite{bu,bu2} for surveys. 
In this case the
existence of the solution is rather trivial and the emphasis is on
the uniqueness of solutions. The multidimensional case with  $D$
generated by $1+\sum_{i=1}^n X_i^2+\sum_{i=1}^m g_i^2$ where
$g_1\ge 0,\ldots,g_m \ge 0$ are the defining relations of $K$ has
been done by M. Putinar and F.-H. Vasilescu in \cite{pv}.

To describe our main results we need some terminology. For a
finite subset $S=\{g_1,\ldots,g_m\}$ of $\RR[\underline{X}]$ write
$K_S=\{a \in \RR^n \mid g_1(a)\ge 0,\ldots,g_m(a)\ge 0\}$ and
$T_S^E$ for the set of all finite sums of elements
of the form $e^2 g_1^{\nu_1} \cdots g_m^{\nu_m}$ where 
$e \in E=D^{-1}\RR[\underline{X}]$ and $\nu_1,\ldots,\nu_m \in \{0,1\}$.
 We say that a rational function $R \in E$ is $\succeq
0$ on a set $\mathcal{X} \subseteq \RR^n$, if there exist $f \in
\RR[\underline{X}]$ and $d \in D$ such that $R=\frac{f}{d}$ and
$fd \ge 0$ on $\x$. We write $\mathcal{Z}(d)$ for the set of real zeros of a
polynomial $d$.

Our main results can be summarized as follows:

\begin{citethm}
Let $S$ be a finite subset of $\RR[\underline{X}]$, $D$ a
multiplicative subset of $\RR[\underline{X}]$ such that $1 \in D$, $0 \not\in D$, and let $R$ be an
element of $E = D^{-1} \RR[\underline{X}]$.
Assume there exists an element $p \in D$
such that $p\ge 1$ on $K_S$ and $kp\ge \sum_{i=1}^n X_i^2$ on $K_S$
for some integer $k \ge 1$. Then
\begin{enumerate}
\item $R$ belongs to the closure of $T_S^E$ in the finest locally
convex topology on $E$ if and only if $R \succeq 0$ on $K_S
\setminus \bigcup_{d \in D} \z(d)$,
\item $R$ belongs to the
closure of $T_S^E$ in the topology of finitely open sets on $E$ if
and only if $R \succeq 0$ on $K_S$.
\item For every linear functional $L$ on $E$ such that $L(T_S^E) \ge 0$, there
exists a measure $\mu$ on $\overline{K_S \setminus \bigcup_{d \in D} \z(d)}$ such that
\[
L\left(\frac{f}{d}\right) = \int \frac{f}{d} \ d \mu
\quad
\text{for every}
\quad
\frac{f}{d} \in E.
\]
\end{enumerate}
\end{citethm}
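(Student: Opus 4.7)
\medskip
\noindent\textbf{Proof plan.}
The plan is to establish the density statements (1) and (2) via a compactification argument driven by the growth hypothesis on $p$, and then to deduce the measure representation (3) from (1) by a Haviland-type argument with an extension to the rational part of $E$.

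One direction of (1) is immediate: for each $a \in K_S \setminus \bigcup_{d \in D} \z(d)$ no denominator vanishes, so the point evaluation $L_a(f/d) = f(a)/d(a)$ is a well-defined linear functional on $E$ with $L_a(T_S^E) \subseteq [0,\infty)$. Since every linear functional is continuous in the finest locally convex topology, $L_a(R) \geq 0$ for each $R \in \overline{T_S^E}$; writing $R = f/d$ gives $fd \geq 0$ on $K_S \setminus \bigcup_d \z(d)$ via $R = (fd)/d^2$, i.e.\ $R \succeq 0$ on the set. The same argument handles the easy direction of (2). For the reverse implications, the plan is to introduce auxiliary variables $Y_0, Y_1, \ldots, Y_n$ and substitute $Y_0 = 1/p$, $Y_i = X_i/p$; the image of $K_S$ under $a \mapsto (1/p(a), a_1/p(a), \ldots, a_n/p(a))$ is contained in the compact basic closed semialgebraic set $\tilde K \subseteq \RR^{n+1}$ cut out by $0 \leq Y_0 \leq 1$ (from $p \geq 1$), $Y_1^2 + \cdots + Y_n^2 \leq k Y_0$ (from $kp \geq \sum X_i^2$), and the homogenizations of the $g_i$. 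On $\tilde K$ Schm\"udgen's Positivstellensatz applies, and a small positive perturbation of $R$ followed by pullback through the substitution should yield a representation in $T_S^E$ approximating $R$ in the appropriate topology.

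For (3), continuity of $L$ in the finest locally convex topology combined with (1) gives $L(f) \geq 0$ for every polynomial $f \geq 0$ on $\overline{K_S \setminus \bigcup_d \z(d)}$. Haviland's theorem then produces a positive Borel measure $\mu$ on that closure with $L(f) = \int f\, d\mu$ for all $f \in \RR[\underline{X}]$. To pass to arbitrary $f/d \in E$, note that $L(1/d^{2})$ is a finite real number and equals $\int 1/d^{2}\,d\mu$, which forces $\mu(\z(d)) = 0$ for every $d \in D$; hence every element of $E$ is $\mu$-integrable, and a short linearity/positivity argument (using that both $L$ and integration against $\mu$ are nonnegative on the cone of $R \succeq 0$, which contains $T_S^E$) identifies $L(f/d)$ with $\int f/d\,d\mu$.

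\emph{Main obstacle.} The most delicate step is the pullback in (1) and (2). The defining inequalities of $\tilde K$, such as $1 - Y_0$ and $k Y_0 - \sum Y_i^2$, correspond under the substitution to rational functions like $(p-1)/p$ and $(kp - \sum X_i^2)/p^2$, which are nonnegative on $K_S$ but not obviously in the preordering $T_S^E$. Showing that a Schm\"udgen representation on $\tilde K$ does pull back into $T_S^E$ --- possibly after a controlled enlargement of $S$ or further use of the hypothesis $p \in D$ --- is the principal algebraic content. A secondary difficulty is the topology-of-finitely-open-sets variant in (2), which will require uniform control of the pulled-back representation on finite-dimensional subspaces of $E$ rather than the pointwise approximation sufficient for (1).
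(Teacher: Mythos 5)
Your outline reproduces the easy direction of (1) and correctly identifies the compactification behind the Positivstellensatz, but it misses the central new idea needed for the hard direction of (1). If $R=f/d$ with $R\succeq 0$ only on $K_S\setminus\bigcup_{d'\in D}\z(d')$, then $fd$ may be strictly negative, by a fixed (non-infinitesimal) amount, on the compact set $K_S\cap\z(d')$ for some $d'\in D$; the substitution $Y_0=1/p$, $Y_i=X_i/p$ followed by Schm\"udgen on $\tilde K$ requires nonnegativity of (a perturbation of) $fd$ on \emph{all} of $K_S$, and no perturbation of the form $R+\eps$ or $R+\eps p^N$ can repair a definite negativity on $K_S\cap\z(d')$. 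The paper's key device (Lemma \ref{comp}) is a Wiener-covering/volume argument producing, inside \emph{every} $\mathcal{T}_{\omega}$-neighbourhood of $0$ in $E$, an element of the form $\sum_i r_{a_i}^{2n}c_{a_i}^{2n}/d_{a_i}^{2n}$ that dominates $c\cdot\chi_K$ pointwise for a prescribed compact $K\subseteq\bigcup_{d\in D}\z(d)$; adding such an element makes $fd$ nonnegative on $K_S$, after which one invokes Schweighofer's Theorem 5.1 of \cite{sch} (which is precisely the published form of the compactification you propose to redo, growth condition on $p$ included). Without a mechanism for producing topologically small but pointwise large elements supported near the pole locus, your plan cannot reach statement (1).

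Two further gaps. First, the ``only if'' direction of (2) is \emph{not} handled by point evaluations: these exist only at points of $K_S\setminus\bigcup_{d\in D}\z(d)$, so they show $R\succeq 0$ there, i.e.\ membership in $\pos^E(K_S\setminus\bigcup_{d\in D}\z(d))$, which by (1) equals the $\mathcal{T}_{\omega}$-closure and is in general strictly larger than $\pos^E(K_S)$. One must instead prove that $\pos^E(K_S)$ is itself closed in $\mathcal{T}_{\fin}$; the paper does this via a semialgebraic cell decomposition yielding a single $d_K\in D$ with $\overline{K_S\setminus\z(d_K)}\subseteq\overline{K_S\setminus\z(d)}$ for all $d\in D$ (Lemmas \ref{decomp}, \ref{dk}, \ref{polyrat}, \ref{clospos}). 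Second, in (3) the step ``$L(1/d^2)$ equals $\int 1/d^2\,d\mu$, which forces $\mu(\z(d))=0$'' is circular: Haviland's theorem identifies $L$ with integration only on polynomials, and extending that identity to $1/d^2$ is exactly the assertion being proved. The paper's Rational Haviland Theorem (Theorem \ref{rht}) instead extends the functional to $\hat{D}^{-1}\mathcal{C}'(\x,\RR)$, introduces auxiliary measures $\mu_s$ via Riesz, proves $\mu_s(\z(\hat{s}))=0$ through a Urysohn-function estimate $\bar{L}(f_{k,i}/\hat{s})\le(1/i)\bar{L}(u_k/\hat{s}^2)$, and only then passes to general $f/d$ by monotone convergence; some argument of this kind is unavoidable and is missing from your sketch.
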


See Theorems \ref{main}, \ref{mainmom} and \ref{main2}. These results carry over to localizations of an arbitrary finitely generated $\RR$-algebra; see Theorem \ref{mainfg}. Assertion (3) of the theorem solves the moment problem on $E$. For the proof of assertion (3) we use assertion (1) together with a certain
rational version of the Riesz-Haviland Theorem which is a
generalization of \cite[Theorem 3.2.2]{mar} and \cite[Theorem
3.2]{v}; see Theorem \ref{rht}.

The conditions on the polynomial $p$ (that $p\ge 1$ on $K_S$ and $kp\ge \sum_{i=1}^n X_i^2$ on $K_S$
for some integer $k \ge 1$) are paraphrased by saying that ``$p$ grows fast enough on $K_S$''. See also  \cite[Theorem 6.2.3]{mar} and \cite[Theorem 5.1]{sch}. One can always take $p=1+\sum_{i=1}^n X_i^2$ (provided, of course, that $1+\sum_{i=1}^n X_i^2 \in D$).
If $K_S$ is compact one can take $p=1$, so, in this case, the only assumptions on the
multiplicative set $D$ are the trivial ones $1\in D, 0\notin D$.

\section{Preorderings and the finest locally convex topology}

\subsection*{Definition}
Let $E$ be an $\RR$-vector space. A  set $U\subseteq E$ is
called \textit{absorbent}, if for every $x\in E$ there exists
$\lambda> 0$ such that $x\in\lambda U.$ $U$ is called
\textit{symmetric}, if $\lambda U\subseteq U$ for all
$\lambda\in [-1,1].$

The set of all convex, absorbent and symmetric subsets of $E$ forms
a zero neighborhood base of a vector space topology on $E$ (see
\cite[II.25]{d}). It is called the \textit{finest locally convex
topology} on $E$ and the collection of all open sets is denoted by
$\mathcal{T}_{\omega}$.
\medskip

The following are well-known:
\begin{enumerate}
\item[(T1)] $(E,\mathcal{T}_{\omega})$ is a topological vector space,
i.e. addition and scalar multiplication are continuous.
Moreover,  $(E,\mathcal{T}_{\omega})$ is Hausdorff.
\item[(T2)] Every linear mapping from $(E,\mathcal{T}_{\omega})$ to
a vector space with any locally convex topology is continuous.
In particular, all functionals on $(E,\mathcal{T}_{\omega})$
are continuous.
\end{enumerate}
These properties are proved in \cite[II.26]{d} .

\medskip

A subset $T$ of a commutative ring $E$ with $1$ is a
\textit{preordering} if it is closed under addition and
multiplication and if it contains the set $E^2=\{a^2 \mid a \in
E\}$. As usual, $\sum E^2$ denotes the preordering of $E$ consisting of sums of squares.

\begin{comment}
The smallest preordering in $E$ which contains a given
subset $S \subseteq E$ will be denoted by $T_S^E$. It is equal to
the set of all finite sums of elements of the form $e^2 g_1 \cdots
g_m$, where $e \in E$, $m\in\NN\cup\{0\}$ and all $g_i\in S$.
\end{comment}

\begin{lemma}
\label{l} For any preordering $T$ on any commutative $\RR$-algebra
$E$, its closure in $\mathcal{T}_{\omega}$ is also a preordering.
\end{lemma}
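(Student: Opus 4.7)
The plan is to check that the closure $\bar T$ satisfies the three defining properties of a preordering: containing $E^2$, closure under addition, and closure under multiplication. The first is immediate since $E^2 \subseteq T \subseteq \bar T$. The second follows quickly from (T1): addition is (jointly) continuous, so $\bar T + \bar T \subseteq \overline{T+T} \subseteq \bar T$, using that $T+T \subseteq T$.

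The interesting part is closure under multiplication, and it is also the main obstacle: multiplication on $E$ is in general \emph{not} jointly continuous in $\mathcal{T}_\omega$, so I cannot simply imitate the argument for addition. The key observation is that for each fixed $a \in E$, the map $\mu_a \colon E \to E$, $x \mapsto ax$, is $\RR$-linear, and by (T2) every linear map from $(E,\mathcal{T}_\omega)$ to itself is continuous. So multiplication is separately continuous, and I can use this to bootstrap in two stages.

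\medskip

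First, fix $a \in T$. Since $\mu_a$ is continuous and $\mu_a(T) \subseteq T$ (because $T$ is a preordering), continuity yields $\mu_a(\bar T) \subseteq \overline{\mu_a(T)} \subseteq \bar T$. Hence $T \cdot \bar T \subseteq \bar T$. Second, fix $b \in \bar T$ and consider the linear map $\nu_b \colon x \mapsto xb$; again by (T2) it is continuous. By the previous step, $\nu_b(T) = T \cdot b \subseteq \bar T$, so $\nu_b(\bar T) \subseteq \overline{\nu_b(T)} \subseteq \overline{\bar T} = \bar T$. This gives $\bar T \cdot \bar T \subseteq \bar T$, completing the proof.

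\medskip

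In summary, the only nontrivial ingredient is separate continuity of multiplication, which is not part of the definition of a topological vector space but is supplied by (T2) applied to the linear maps $x\mapsto ax$. Once that is in hand, the two-step argument (first $a\in T$ with $b\in\bar T$, then $a\in\bar T$ with $b\in\bar T$) closes up the multiplication.
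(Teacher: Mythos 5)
Your proposal is correct and follows essentially the same route as the paper: separate continuity of multiplication via (T2), then the two-step bootstrap ($T\cdot\overline{T}\subseteq\overline{T}$ first, then $\overline{T}\cdot\overline{T}\subseteq\overline{T}$) is exactly the paper's argument with $\phi_f$. No gaps.
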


\begin{proof}
Since $T \subseteq \overline{T}$, $E^2 \subseteq \overline{T}$.
Since the addition on $E$ is continuous, it follows that
$\overline{T}$ is closed under addition. We can't use the same
argument for multiplication because by \cite[p. 734, Theorem
2]{w}, the multiplication on $E=\CC(X)$ is not continuous.
However, for every $f \in E$, the mapping $\phi_f \colon E \to E$,
$\phi_f(h)=fh$, is linear, hence continuous by (T2). So, for every
$f \in T$,
\[
\phi_f(\overline{T})\subseteq \overline{\phi_f(T)}\subseteq \overline{T}
\]
holds, as $\phi_f(T)\subseteq T.$ It follows that for every $g \in \overline{T}$
\[
\phi_g(\overline{T})\subseteq\overline{\phi_g(T)}\subseteq \overline{T},
\]
as $\phi_g(T)\subseteq \overline{T}$, by the above consideration.
So for any $g, h \in\overline{T}$, $g \cdot h=\phi_g(h)\in\overline{T}$.
\end{proof}

\begin{remark}
Note that every preordering $T$ on an $\RR$-algebra $E$ is a
convex cone, so by (T2) and the Separation Theorem for convex sets
(e.g. \cite[II.39, Corollary 5]{d}), an element $x$ belongs to
$\overline{T}$ if and only if $L(x) \ge 0$ for every linear
functional $L$ on $E$ such that $L(T) \ge 0$. It follows that for
all elements $a,b \in E$ such that $a + \epsilon b \in T$ for
every $\epsilon>0$, we have that $a \in \overline{T}$.
\end{remark}

From now on, we are mostly interested in localizations of the real
polynomial algebra $\RR[\underline{X}]$ in $n$ variables
$\underline{X}=(X_1,\ldots,X_n)$. So for a multiplicative set
$D\subseteq \RR[\underline{X}]\setminus\{0\}$ containing $1$, we
examine $E=D^{-1}\RR[\underline{X}]$.

\begin{remark}
While the vector space dimension of $\RR[\underline{X}]$ is
countable, the dimension of $E$ can be uncountable. For example,
if $D$ is the set of polynomials without real zeros, then the
uncountable family $ \left\{ \frac{1}{X_1^2 +c^2}\mid c\in
\RR^{\neq 0}\right\}\subseteq D^{-1}\RR[\underline{X}] $ is
linearly independent.
\end{remark}

For a set $\x\subseteq\RR^n$, an arbitrary function $\phi\colon\x\rightarrow\RR$
and an element $R \in E=D^{-1}\RR[\underline{X}]$, we say
\[
R\succeq\phi  \mbox{ on } \x,
\]
if there exist $f\in\RR[\underline{X}]$ and $d\in D$ such that
$R=\frac{f}{d}$ and $(f-d\phi) d \geq 0$ on $\x$. This just means
that $R$ has a representation $\frac{f}{d}$, $f\in \RR[\underline{X}]$, $d\in D$, where the function
$\frac{f}{d}$ is $\geq$ $\phi$ pointwise on $\x$, wherever it is
defined. The set
\[
\pos^E(\x)=\left\{R\in E\mid R\succeq 0 \mbox{ on } \x\right\}
\]
is a preordering in $E$. We also write
\[
\pos(\x)=\left\{f\in \RR[\underline{X}]\mid f\geq 0 \mbox{ on } \x\right\}
\]
for the preordering of all polynomials nonnegative on $\x$ in the usual sense.
Note that $-1 \not\in \pos(\x)$ unless $\x$ is empty. On the other hand,
$-1 \in \pos^E(\x)$ if $\x \subseteq \z(d)$ for some $d \in D$. However, if
$\x \setminus \z(d)$ is dense in $\x$ for every $d \in D$, such phenomena cannot occur.

\begin{lemma}
\label{posi}
For every element $R \in E=D^{-1}\RR[\underline{X}]$ and every
set $\x \subseteq \RR^n$ such that $\x \setminus \z(d)$ is dense
in $\x$ for every $d \in D$, the following are equivalent:
\begin{enumerate}
\item $R \in \pos^E(\x)$,
\item for every representation $R=\frac{f}{d}$ with $f \in \RR[\underline{X}]$
and $d \in D$, we have that $fd \in \pos(\x)$,
\item if we consider $R$ as an element of $\RR(\underline{X})$ and write
$R=\frac{a}{b}$ where $a, b\in \RR[\underline{X}]$,
$b \ne 0$ and $\gcd(a,b)=1$, then $ab \in \pos(\x)$.
\end{enumerate}
In particular, for all $f \in \RR[\underline{X}]$,
$\frac{f}{1} \in \pos^E(\x)$ if and only if $f \in \pos(\x)$.
\end{lemma}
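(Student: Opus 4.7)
The plan is to establish the cyclic chain of implications $(1) \Rightarrow (2) \Rightarrow (3) \Rightarrow (1)$, and then recover the final ``in particular'' assertion by applying the equivalence of (1) and (2) to $R = f/1$. The direction $(2) \Rightarrow (1)$ is immediate, since $R \in D^{-1}\RR[\underline{X}]$ means $R$ admits at least one representation of the form $f/d$ with $d \in D$. The real content of the lemma is the observation that, under the density hypothesis, the sign of $fd$ on $\x$ does not depend on which representation we choose; the density hypothesis is exactly what rules out the pathological phenomenon pointed out just before the statement, where $-1$ could accidentally lie in $\pos^E(\x)$.

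For $(1) \Rightarrow (2)$: fix a representation $R = f_0 / d_0$ with $f_0 d_0 \ge 0$ on $\x$, and take any other representation $R = f/d$ with $d \in D$. Cross-multiplying the identity $R = f_0/d_0 = f/d$ in $\RR[\underline{X}]$ gives $f d_0 = f_0 d$, from which I compute $(f d)(f_0 d_0) = (f d_0)^2$ as polynomials. On the dense subset $\x \setminus \z(d_0)$ we have $d_0(x)^2 > 0$, so $f_0 d_0 \ge 0$ forces $fd \ge 0$ wherever $f_0 d_0$ is strictly positive; at the remaining points of $\x\setminus\z(d_0)$ where $f_0(x) = 0$, the identity $f d_0 = f_0 d$ at once gives $f(x) = 0$ and hence $(fd)(x) = 0$. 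Thus $fd \ge 0$ on a dense subset of $\x$, and continuity of the polynomial $fd$ yields $fd \ge 0$ on $\x$.

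For the $(2) \Leftrightarrow (3)$ bridge, write $R = a/b$ in lowest terms. For any representation $R = f_0/d_0$ with $d_0 \in D$, the equation $a d_0 = b f_0$ together with $\gcd(a,b) = 1$ forces $b \mid d_0$, say $d_0 = b c$, and then $f_0 = a c$. This yields the useful representation $R = (ac)/(bc)$ whose numerator times denominator equals $a b c^2$. For $(2) \Rightarrow (3)$, applying (2) to this representation gives $a b c^2 \in \pos(\x)$; dividing by $c^2 > 0$ on $\x \setminus \z(c) \supseteq \x \setminus \z(d_0)$ and invoking density plus continuity yields $ab \ge 0$ on $\x$. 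For $(3) \Rightarrow (1)$, the same representation works directly: $f_0 d_0 = a b c^2 \ge 0$ on $\x$ because $ab \ge 0$ and $c^2 \ge 0$, so $R \in \pos^E(\x)$ witnessed by $f_0 / d_0$.

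Finally, the ``in particular'' clause follows by applying $(1) \Leftrightarrow (2)$ to $R = f/1$ using the representation with $d = 1$: condition (2) then reads $f = f \cdot 1 \in \pos(\x)$. The only delicate point in the whole argument is the appeal to density and continuity in the $(1) \Rightarrow (2)$ and $(2) \Rightarrow (3)$ steps; without the hypothesis that $\x \setminus \z(d)$ is dense in $\x$ for every $d \in D$, one cannot pass from pointwise nonnegativity on the ``generic'' part of $\x$ to nonnegativity everywhere, and indeed the equivalences genuinely fail in that case. No other step presents a real obstacle.
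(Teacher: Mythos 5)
Your proof is correct and follows essentially the same route as the paper's: both arguments rest on the observation that numerator times denominator changes only by a square factor (your $c^2$, the paper's $u^2$) when the representation changes, and then use density of $\x\setminus\z(d)$ together with continuity of polynomials to transfer nonnegativity to all of $\x$. Your separate treatment of $(1)\Rightarrow(2)$ via the identity $(fd)(f_0d_0)=(fd_0)^2$ is only an organizational variant of the paper's single computation $fd=abu^2$.
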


\begin{proof}
Suppose that $R=\frac{f}{d}=\frac{a}{b}$ where $a, b \in
\RR[\underline{X}]$, $b \ne 0$, satisfy $\gcd(a,b)=1$ and $f \in
\RR[\underline{X}]$, $d \in D$. Then there exists $u \in
\RR[\underline{X}] \setminus \{0\}$ such that $f=ua$ and $d=ub$.
Clearly, $fd=abu^2 \in \pos(\x)$ if and only if $ab \in \pos(\x
\setminus \z(bu))$. By the assumption on $\x$, this is equivalent
to $ab \in \pos(\x)$. So, (1), (2) and (3) are equivalent. The
last claim is a special case of (3).
\end{proof}

For a finite subset $S=\{g_1,\ldots,g_m\}$ of $\RR[\underline{X}]$ let
\[
K_S=\{a\in\RR^n\mid g_1(a)\geq 0,\ldots, g_m(a)\geq 0\}
\]
be the basic closed semialgebraic set defined by $S$.  Let
\[
T_S=  \{ \sum_{\nu \in \{0,1\}^m} s_\nu g_1^{\nu_1} \cdots g_m^{\nu_m}
\mid s_\nu \in \sum \RR[\underline{X}]^2\}
\]
and
\[
T_S^E = \{ \sum_{\nu \in \{0,1\}^m} s_\nu g_1^{\nu_1} \cdots g_m^{\nu_m}
\mid s_\nu \in \sum E^2 \}
\]
be the preorderings generated by $S$ in  $\RR[\underline{X}]$ and $E=D^{-1}\RR[\underline{X}]$,
respectively. We always assume $K_S\neq \emptyset$, so $-1\not\in T_S.$
If $K_S$ is not contained in any $\z(d)$, $d \in D$, then also $-1 \not\in T_S^E$.

The following is our main result:

\begin{thm}\label{main}
Let $D\subseteq \RR[\underline{X}]\setminus\{0\}$ be a
multiplicative set containing $1$ and $S$ a finite subset of
$\RR[\underline{X}].$ Suppose there is some $p\in D$ such that
$p\geq 1$ on $K_S$ and $kp-\sum_{i=1}^nX_i^2\geq 0$ on $K_S$ for
some $k\geq 1.$ Then
$$\overline{T_S^E}=\pos^E(K_S\setminus \bigcup_{d\in D}\mathcal{Z}(d))$$ holds in
$E=D^{-1}\RR[\underline{X}].$
\end{thm}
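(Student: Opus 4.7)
The plan splits into the two inclusions.

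For $\overline{T_S^E} \subseteq \pos^E(K_S \setminus \bigcup_{d \in D} \z(d))$: each point $a$ in this set yields a well-defined $\RR$-linear evaluation $L_a : E \to \RR$, $L_a(f/d) = f(a)/d(a)$, manifestly nonnegative on $T_S^E$. By the Remark after Lemma \ref{l}, every $R \in \overline{T_S^E}$ satisfies $L_a(R) \geq 0$ for all such $a$; fixing a representation $R = f/d$ this forces $fd \geq 0$ pointwise on $K_S \setminus \bigcup_{d' \in D} \z(d')$, and Lemma \ref{posi} --- whose density hypothesis is trivially satisfied for this set, since it already excludes every $\z(d')$ --- yields $R \in \pos^E(K_S \setminus \bigcup_{d \in D} \z(d))$.

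For the reverse inclusion, take $R = f/d \in \pos^E(K_S \setminus \bigcup_{d \in D} \z(d))$. By Lemma \ref{posi}, $fd \geq 0$ pointwise on $K_S$. The strategy is first to establish $fd \in \overline{T_S}$ in the topology $\mathcal{T}_\omega$ of $\RR[\underline{X}]$, and then to push the approximation into $E$ by dividing through by the square $d^2$.

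For the first step, I invoke the following external ingredient: under the growth hypothesis on $p$, the $K_S$-moment problem in $\RR[\underline{X}]$ is solvable, i.e.\ every $\RR$-linear $\lambda : \RR[\underline{X}] \to \RR$ with $\lambda(T_S) \geq 0$ arises as integration against a positive Borel measure $\mu$ on $K_S$. This is the Schmüdgen-type Positivstellensatz under a growth condition, cf.\ \cite[Theorem 6.2.3]{mar} and \cite[Theorem 5.1]{sch}. Given this, $\lambda(fd) = \int fd\, d\mu \geq 0$ for every such $\lambda$, and the separation statement in the Remark after Lemma \ref{l}, applied inside $\RR[\underline{X}]$, delivers $fd \in \overline{T_S}$.

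For the transfer, the $\RR$-linear map $\phi : \RR[\underline{X}] \to E$, $h \mapsto h/d^2$, is continuous from $(\RR[\underline{X}], \mathcal{T}_\omega)$ into $(E, \mathcal{T}_\omega)$ by (T2). Picking a net $t_\alpha \in T_S$ with $t_\alpha \to fd$ yields $t_\alpha/d^2 \to fd/d^2 = R$ in $(E, \mathcal{T}_\omega)$. Since $1/d^2 = (1/d)^2 \in \sum E^2$, multiplying the preordering expression of each $t_\alpha \in T_S$ through by $1/d^2$ keeps the sum-of-squares coefficients in $\sum E^2$, so $t_\alpha/d^2 \in T_S^E$. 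Therefore $R \in \overline{T_S^E}$. The principal obstacle is the external moment-problem input in $\RR[\underline{X}]$; once it is in hand, the passage to the localization $E$ is essentially formal, resting on the universal continuity property (T2) of $\mathcal{T}_\omega$ together with the stability of $T_S^E$ under multiplication by squares.
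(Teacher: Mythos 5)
Your proof of the easy inclusion is fine, but the hard inclusion has a fatal gap at its very first step. From $R=f/d\in\pos^E(K_S\setminus\bigcup_{d'\in D}\z(d'))$, Lemma \ref{posi} (applied with $\x=K_S\setminus\bigcup_{d'\in D}\z(d')$, which is the only set for which its density hypothesis holds automatically) gives you only $fd\ge 0$ on $K_S\setminus\bigcup_{d'\in D}\z(d')$, \emph{not} on all of $K_S$. The set $K_S\setminus\bigcup_{d'\in D}\z(d')$ need not be dense in $K_S$: some $d'\in D$ may vanish on a relatively open piece of $K_S$, and there $fd$ can be strictly negative. Handling exactly this discrepancy is the technical heart of the paper: Lemma \ref{comp} constructs, inside any neighbourhood of $0$ in $E$, an element $R'$ with $R'\succeq c\cdot\chi_K$ for a compact $K\subseteq\bigcup_{d'\in D}\z(d')$ (via Wiener's covering lemma and volume estimates), Lemma \ref{compa} uses it to push $f$ into $\overline{T_S^E}$ when $f\ge 0$ only off a compact subset of the zero locus, and Lemma \ref{arb} removes the compactness by an $\epsilon p^{\deg f+1}$ perturbation. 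Your argument silently assumes this entire step away.

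There is a second, independent error: the ``external ingredient'' you invoke is false as stated. Under the hypotheses on $p$, the $K_S$-moment problem for $T_S$ \emph{in} $\RR[\underline{X}]$ is not solvable in general, and $\overline{T_S}\ne\pos(K_S)$ in $(\RR[\underline{X}],\mathcal{T}_\omega)$ --- already for $S=\emptyset$, $K_S=\RR^n$, $n\ge 2$, where nonnegative non-SOS polynomials are not in the closure of $\sum\RR[\underline{X}]^2$. What \cite[Theorem 5.1]{sch} actually gives is membership in the closure of the preordering generated by $S$ in the localization $\RR[\underline{X},\frac1p]$; the denominators $p^{2N}$ are essential, and the growth condition on $p$ is precisely what licenses them. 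This particular defect is repairable, since $p\in D$ means that closure sits inside $\overline{T_S^E}$, which is all you need; but as written your deduction ``$fd\in\overline{T_S}$'' is unjustified. The final transfer step (multiplying by $(1/d)^2$ and using continuity of $h\mapsto h/d^2$) is correct and is exactly how the paper concludes.
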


The set $\x = K_S\setminus \bigcup_{d\in D}\mathcal{Z}(d)$ satisfies
the assumptions of Lemma \ref{posi}, which gives two characterizations of
$\pos^E(\x)$. Another characterization is that $R \in \pos^E(\x)$ if and only if $\chi(R) \ge 0$
for every unital $\RR$-algebra homomorphism $\chi \colon E \to \RR$ such that $\chi(S) \ge 0$.

\medskip

We will give the proof of Theorem \ref{main} in Section \ref{proof}.
In Section \ref{moment}, we will show that Theorem \ref{main} implies
the solution of the real multidimensional rational $K$-moment problem.
In Section \ref{variant} we will prove a variant of Theorem \ref{main}
for the topology of finitely open sets.

\medskip

Note that in case $K_S$ is compact, we can always take $p=1$ in
the theorem. In the noncompact case, the polynomial
$p=1+\sum_{i=1}^nX_i^2$ can always be used in the application of
Theorem \ref{main}, as long as it belongs to the multiplicative
set $D$. We record an easy corollary of Theorem \ref{main}, in
which this is the case. Therefore let $\RR(\underline{X};\P)$ be
the algebra of real rational functions with (real) poles only in a
given set $\P\subseteq\RR^n.$ It is the localization of the
polynomial algebra with respect to the set $D$ of nonzero
polynomials with real zeros only in $\P$. The set of zeros of
elements from $D$ equals $\P$. The polynomial
$p=1+\sum_{i=1}^nX_i^2$ belongs to $D$.

\begin{cor}\label{rat} 
In $E=\RR(\underline{X};\P)=D^{-1}\RR[\underline{X}]$ we have for arbitrary finite
sets $S\subseteq\RR[\underline{X}]$
$$\overline{T_S^E}=\pos^E(K_S\setminus \P).$$ In particular
\label{c1} $$ \overline{\sum \RR(\underline{X};\P)^2} =
\pos^E(\RR^n \setminus \P).$$ If  $\P=\RR^n$, then
$\RR(\underline{X};\P)= \RR(\underline{X})$ and $\pos^E(\RR^n
\setminus \P)=\RR(\underline{X})$, so
\[
\overline{\sum \RR(\underline{X})^2} = \RR(\underline{X}).
\]
In particular, there is no positive linear functional on $\RR(\underline{X})$.
\end{cor}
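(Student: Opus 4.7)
The plan is to apply Theorem~\ref{main} directly with $D$ the multiplicative set of nonzero polynomials whose real zeros lie in $\P$; the only items to verify are the growth hypothesis and the identity $\bigcup_{d\in D}\z(d)=\P$.

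For the growth hypothesis, the polynomial $p=1+\sum_{i=1}^n X_i^2$ has no real zeros, so trivially lies in $D$, and satisfies $p\ge 1$ and $p-\sum_{i=1}^n X_i^2=1\ge 0$ on all of $\RR^n$, so $k=1$ works on any $K_S$. For the set-theoretic identity, the inclusion $\bigcup_{d\in D}\z(d)\subseteq \P$ is immediate from the definition of $D$, and for the reverse inclusion, given $a\in\P$ the polynomial $d_a=\sum_{i=1}^n (X_i-a_i)^2$ is nonzero with $a$ as its only real zero, so $d_a\in D$ and $a\in\z(d_a)$. Theorem~\ref{main} then yields $\overline{T_S^E}=\pos^E(K_S\setminus\P)$ for every finite $S\subseteq\RR[\underline{X}]$; the second displayed equation is the specialization $S=\emptyset$, for which $K_S=\RR^n$ and $T_S^E=\sum E^2$.

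In the case $\P=\RR^n$, every nonzero polynomial qualifies for $D$, so $E=\RR(\underline{X})$, and $\RR^n\setminus\P=\emptyset$ makes the condition ``$R\succeq 0$ on $\emptyset$'' vacuous, giving $\pos^E(\emptyset)=E$ and hence $\overline{\sum\RR(\underline{X})^2}=\RR(\underline{X})$. To conclude the non-existence of a positive linear functional, I would invoke property (T2): any linear functional $L$ on $E$ is $\mathcal{T}_\omega$-continuous, so $L\ge 0$ on $\sum E^2$ propagates to $L\ge 0$ on the closure, which equals $E$; applying this to both $R$ and $-R$ forces $L\equiv 0$, so no nonzero positive functional can exist.

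The entire argument is essentially a bookkeeping reduction to Theorem~\ref{main}, with no real obstacle; the only mildly substantive point is the explicit description of $\bigcup_{d\in D}\z(d)$ as $\P$, ensuring the conclusion of Theorem~\ref{main} simplifies to $\pos^E(K_S\setminus\P)$ rather than to something strictly larger, and this is precisely what the quadratic $d_a$ supplies.
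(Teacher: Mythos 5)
Your proposal is correct and follows essentially the same route as the paper: the corollary is recorded there as an immediate consequence of Theorem~\ref{main} after noting that $p=1+\sum_{i=1}^n X_i^2$ lies in $D$ and that the zero sets of elements of $D$ exhaust $\P$ (your $d_a=\sum_i(X_i-a_i)^2$ makes this explicit), with the final non-existence claim following from continuity of functionals in $\mathcal{T}_\omega$ exactly as you argue.
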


The last claim of Corollary \ref{rat} also follows from \cite[9.7.29]{palm}.

\begin{remark}
Let $L/\RR$ be a proper field extension. We claim that there is no
nontrivial positive linear functional on $L$. If $L$ is not real,
then every element is a sum of squares, so there is clearly no
nontrivial positive linear functional. Otherwise, every element
$a$ of $L\setminus\RR$ is transcendental over $\RR$. So $\RR(a)$
is isomorphic to $\RR(X)$, and by Corollary \ref{c1} there is no
nontrivial positive linear functional on $\RR(a)$.
\end{remark}

\begin{remark}
When $K_S$ is compact and disjoint from $\bigcup_{d \in D}\z(d)$,
then we can prove Theorem \ref{main} by the usual analytic trick.
Namely, in this case W\" ormann's trick implies that $T_S^E$ is
archimedean. (Since W\" ormann's trick works only for finitely
generated $\RR$-algebras, we apply it first to algebras
$\RR[\underline{X}]_d$ and then use that
$E=D^{-1}\RR[\underline{X}]$ is equal to $\bigcup_{d \in D}
\RR[\underline{X}]_d$.) Then the Kadison-Dubois Representation
Theorem implies that the closure of $T_S^E$ is equal to
$\pos^E(K_S)$. We refer the reader to \cite{mar} for the statement
of the W\" ormann's trick and the Representation Theorem. We will
not pursue this idea further.
\end{remark}

\section{The proof of Theorem \ref{main}}
\label{proof}

The easy inclusion $\overline{T_S^E} \subseteq
\pos^E(K_S\setminus\bigcup_{d\in D}\z(d))$ follows from the
obvious fact that evaluation in a point from
$K_S\setminus\bigcup_{d\in D}\z(d)$ defines a linear functional
$L$ on $E$  such that $L(T_S^E) \ge 0$.

The proof of the difficult inclusion $\overline{T_S^E} \supseteq
\pos^E(K_S\setminus\bigcup_{d\in D}\z(d))$ will be split into
several lemmas.

\begin{lemma}\label{comp}
Let $D \subseteq \RR[\underline{X}] \setminus \{0\}$ be a
multiplicative set containing $1$, $K$ a compact set contained in
$\P=\bigcup_{d \in D} \z(d)$, and $c \in \RR^{> 0}$. Then every
neighbourhood of zero in $D^{-1} \RR[\underline{X}]$ (in the
finest locally convex topology) contains an element $R$ such that
$R \succeq c\cdot \chi_K$ globally, where $\chi_K$ is the
characteristic function of $K$.
\end{lemma}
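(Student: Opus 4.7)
My plan is to use compactness of $K\subseteq\P=\bigcup_{d\in D}\z(d)$ to reduce to a finite family of elements of $D$, and then to build $R$ as a sum of ``pole-like'' rational functions concentrated at chosen points of $K$, with a small overall prefactor. The inequality $R\succeq c\chi_K$ should be forced at every $y\in K$ by denominator smallness, while smallness of $R$ in $\mathcal{T}_{\omega}$ is arranged by the prefactor.

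In detail: for each $x\in K$ I pick $d_x\in D$ with $d_x(x)=0$. For a parameter $\eta>0$ to be chosen, the open sets $V_x^{\eta}:=\{y\in\RR^n:d_x(y)^2<\eta^2\}$ form an open cover of $K$, and by compactness I extract a finite subcover $V_{x_1}^{\eta},\dots,V_{x_k}^{\eta}$. Setting $q_i:=d_{x_i}^2\in D$, each $q_i\geq 0$ globally and $q_i<\eta^2$ on $V_{x_i}^{\eta}$. I then propose
\[
R_\eta:=c\eta^2\sum_{i=1}^{k}\frac{1}{q_i}\in E.
\]
With common denominator $d:=\prod_i q_i\in D$ and numerator $f:=c\eta^2\sum_i\prod_{j\neq i}q_j$, both $f,d\geq 0$ globally. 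The condition $R_\eta\succeq c\chi_K$ globally then reduces to checking $(f-dc\chi_K)d\geq 0$ pointwise: off $K$ this is $fd\geq 0$, immediate; for $y\in K$ we have $y\in V_{x_i}^{\eta}$ for some $i$, and either $d(y)=0$ (trivial) or $R_\eta(y)\geq c\eta^2/q_i(y)>c$, whence $(f(y)-d(y)c)d(y)>0$.

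The delicate step is to arrange $R_\eta\in U$ by choosing $\eta$ small. For a point evaluation $L_y:R\mapsto R(y)$ at $y\notin\P$ one has $L_y(R_\eta)=c\eta^2\sum_{i=1}^{k}1/d_{x_i}(y)^2$, and I expect the $\eta^2$ prefactor to dominate the (polynomial in $1/\eta$) growth of the sum as the cover is refined, yielding $L_y(R_\eta)\to 0$. Because in $\mathcal{T}_{\omega}$ every linear functional is continuous and neighbourhoods of $0$ are controlled by their intersections with finite-dimensional subspaces, one hopes to transfer these bounds to the finitely many continuous seminorms cutting out $U$ on the span of the $1/q_i$ and conclude $R_\eta\in U$. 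This is the main obstacle: the cover itself depends on $\eta$, so continuity of scalar multiplication cannot be invoked directly, and one must control the joint growth of $k$ and of each $1/d_{x_i}(y)^2$ against the prefactor $\eta^2$ for every seminorm defining $U$.
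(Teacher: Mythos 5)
Your verification that $R_\eta\succeq c\chi_K$ on the finite cover is correct in spirit, but the proof has a genuine gap exactly where you flag it, and it cannot be closed along the lines you sketch. In the finest locally convex topology the only leverage an absorbent convex neighbourhood $N$ of zero gives you is a scalar $\delta(f)>0$ for each \emph{individual} element $f\in E$ with $\lambda f\in N$ for $|\lambda|\le\delta(f)$; there is no uniformity across different elements, and $\delta$ may decay arbitrarily fast along any family. Since your cover, hence the collection $\{1/q_i\}$ and its cardinality $k$, changes with $\eta$, the claim that ``$c\eta^2/q_i\in N$ for $\eta$ small'' is simply not available: each new $\eta$ produces new elements $1/q_i$ whose thresholds $\delta(1/q_i)$ you do not control, and the restriction of $N$ to the varying finite-dimensional spans of these elements carries no uniform quantitative information as $\eta\to 0$. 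Testing against point evaluations cannot substitute for this, since $E$ may have uncountable dimension and a neighbourhood is not pinned down by countably many functionals.

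The paper resolves the tension by inverting the dependence: the cover adapts to $N$ rather than to a uniform parameter $\eta$. At each $a\in K$ one has the Taylor estimate $|d_a(x)|\le c_a\|x-a\|$ on a ball $B(a,\eta_a)$, and the radius $r_a$ used at $a$ is chosen proportional to $\delta(d_a^{-2n})^{1/n}$ (and at most $\eta_a$). Then the summand $\frac{r_a^n c_a^{2n}}{\lambda^n d_a^{2n}}$ lies in $N$ by construction of $r_a$, while $\frac{r_a^{2n}c_a^{2n}}{d_a^{2n}}\ge 1$ on $B(a,r_a)$ by the Taylor bound. Wiener's covering lemma extracts a finite subcover whose one-third balls are pairwise disjoint, and the resulting volume estimate $\sum_i r_{a_i}^n\le 1/(\lambda^n c)$ lets one write $R$ as a \emph{convex combination} of elements of $N$ --- convexity of $N$ then absorbs the unbounded and uncontrolled number of terms --- while the same estimate guarantees $R\succeq c\chi_K$. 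The exponent $2n$, matched to the dimension, is what makes the volume bookkeeping close. Without some mechanism tying the local radii to the thresholds $\delta(\cdot)$ and a covering argument bounding $\sum_i r_{a_i}^n$ independently of the number of balls, your construction does not yield membership of $R_\eta$ in the given neighbourhood.
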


\begin{proof}

Let $N$ be a neighborhood of zero in $D^{-1}\RR[\underline{X}]$,
without loss of generality assume that $N$ is convex and
absorbent. So for any $f\in D^{-1}\RR[\underline{X}]$ there is a
number $\delta(f)>0$, such that $\lambda f\in N$ for all
$\lambda\in[-\delta(f),\delta(f)].$

For every $a \in \bigcup_{d \in D} \z(d)$ we have $d_a(a)=0$ for
some $d_a\in D$, so that $d_a(x)=(\nabla d_a)(a) \cdot
(x-a)+o(\Vert x-a \Vert)$. Write $c_a=\Vert (\nabla d_a)(a)
\Vert+1$ and pick $\eta_a >0$ such that 
\begin{equation}\label{defeta} 
\vert d_a(x) \vert \le c_a \Vert x-a \Vert \mbox{ on } B(a,\eta_a).
\end{equation}

Let $W$ be a cube containing $K$ and let $B_n$ be the unit ball in
$\RR^n$. Define
\begin{equation}\label{defla}
 \lambda := \frac{1}{6} \left(\frac{\vol(B_n)}{c \,\vol(W) }\right)^{\frac{1}{n}}
\end{equation}
and
\begin{equation}\label{defr}
r_a:=\min\{ \lambda \, c_a^{-2} \,
\delta(d_a^{-2n})^{\frac{1}{n}}, \vol(W)^{\frac{1}{n}}, \eta_a \}.
\end{equation}
So $r_a>0$ and
\[
K \subseteq\bigcup_{a\in K} B(a,\frac{r_a}{3}),
\]
where $ B(a,\frac{r_a}{3})$ denotes the open ball of radius
$\frac{r_a}{3}$ around $a$. By Wiener's Covering Lemma,
\cite[Lemma 4.1.1]{kp}, there are $a_1,\ldots,a_t\in K$ such that
\[
K\subseteq \bigcup_{i=1}^{t} B(a_i,r_{a_i})
\]
and the $B(a_i,\frac{r_{a_i}}{3})$ are pairwise disjoint.

For every ball $B$ with center in $W$ and with radius less or equal than half the side length of $W$ we have
\[
\vol(B\cap W)\geq \frac{1}{2^n}\vol(B),
\]
since $\vol(B\cap W)$ is minimal if the center of $B$ lies in one of the corners of $W$.
In particular, this is true for $B=B(a_i,\frac{r_{a_i}}{3})$, $i=1,\ldots,t$, by (\ref{defr}).
Since $B(a_i,\frac{r_{a_i}}{3})$ are pairwise disjoint, it follows that
\[
\sum_{i=1}^t \frac{1}{2^n}\vol \left(B(a_i,\frac{r_{a_i}}{3})\right)\leq \vol(W).
\]
Combining this with the volume formula
\[
\vol\left(B(a_i,\frac{r_{a_i}}{3})\right)= \left( \frac{r_{a_i}}{3} \right)^n \vol(B_n),
\]
we get 
\begin{equation}\label{ineq}
\sum_{i=1}^{t} r_{a_i}^n \leq \frac{6^n
\vol(W)}{\vol(B_n)}=\frac{1}{\lambda^n c},
\end{equation} 
where the last equality follows from (\ref{defla}). Now define
\[
R:=\frac{1}{\lambda^n \sum_{i=1}^{t} r_{a_i}^n}\cdot\sum_{i=1}^{t} \frac{r_{a_i}^{2n} \, c_{a_i}^{2n}}{d_{a_i}^{2n}}= 
\sum_{i=1}^t \frac{r_{a_i}^n}{\sum_{j=1}^t r_{a_j}^n}\cdot  \frac{r_{a_i}^n c_{a_i}^{2n}}{\lambda^n d_{a_i}^{2n}}.
\]
Clearly $R \in D^{-1} \RR[\underline{X}]$ and, as $\frac{r_{a_i}^n
c_{a_i}^{2n}}{\lambda^n} \leq \delta(d_{a_i}^{-2n})$ by (\ref{defr}), each
$\frac{r_{a_i}^n  c_{a_i}^{2n}}{\lambda^n  d_{a_i}^{2n}}$ lies in
$N$. So $R$, as a convex combination of such elements, also lies
in $N$. By (\ref{defeta}) and (\ref{defr}), 
$\frac{r_{a_i}^{2n} c_{a_i}^{2n}}{d_{a_i}^{2n}}\succeq \chi_{B(a_i,r_{a_i})}$
globally, so one checks that
$$\sum_{i=1}^{t} \frac{r_{a_i}^{2n} \,
c_{a_i}^{2n}}{d_{a_i}^{2n}}\succeq \chi_K.$$ 
Therefore,  $R \succeq c\cdot\chi_K$ by (\ref{ineq}).
\end{proof}

\begin{lemma} \label{compa} Let $D\subseteq
\RR[\underline{X}]\setminus\{0\}$ be a multiplicative set
containing $1$ and $S\subseteq\RR[\underline{X}]$ finite. Assume
there is some $p\in D$ such that $p\geq 1$ on $K_S$, and
$kp-\sum_{i=1}^nX_i^2\geq 0$ on $K_S$ for some $k\geq 1$. Let $K$
be a compact subset of $\P=\bigcup_{d\in D}\mathcal{Z}(d).$ Then
every $f\in\RR[\underline{X}]$ which is nonnegative on
$K_S\setminus K$ belongs to $\overline{T_S^E}$ in
$E=D^{-1}\RR[\underline{X}].$
\end{lemma}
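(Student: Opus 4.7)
The plan is to combine the ``bump function'' construction of Lemma \ref{comp} with a noncompact Positivstellensatz (in the spirit of \cite[Theorem 6.2.3]{mar} or \cite[Theorem 5.1]{sch}), using the growth hypothesis on $p$ to transfer representations from $\RR[\underline{X}]$ to $E$.

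Set $\tilde{K} := K \cap K_S$, still a compact subset of $\P$, and fix $c > 0$ with $f + c \geq 1$ on $\tilde{K}$. Given any neighborhood $N$ of zero in $(E, \mathcal{T}_\omega)$, Lemma \ref{comp} supplies $R \in N$ with $R \succeq c\chi_{\tilde{K}}$ globally. Inspecting the explicit construction in the proof of Lemma \ref{comp}, $R$ is already a sum of squares in $E$ of the form $R = f_R/e^2$ with $f_R \in \sum \RR[\underline{X}]^2$ and $e = \prod_i d_{a_i}^n \in D$. Set
\[
h := fe^2 + f_R = (f+R)e^2 \in \RR[\underline{X}].
\]
Then $h \geq 0$ on all of $K_S$: at $x \in K_S \setminus \z(e)$ we have $h(x) = e(x)^2(f+R)(x) \geq e(x)^2(f(x) + c\chi_{\tilde{K}}(x)) \geq 0$ by the choice of $c$ together with the assumption on $f$, while at $x \in \z(e) \cap K_S$ we simply have $h(x) = f_R(x) \geq 0$.

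For each $\delta > 0$, $h + \delta \geq \delta > 0$ on $K_S$. The growth hypotheses $p \geq 1$ and $kp \geq \sum X_i^2$ on $K_S$ are exactly what is needed to invoke a noncompact Schm\"udgen-type Positivstellensatz, yielding some $L = L(\delta) \in \NN$ and a polynomial representation
\[
p^{2L}(h + \delta) \in T_S.
\]
Dividing both sides by $(p^L e)^2 \in D$ transfers this to $E$: each sum-of-squares coefficient $\sigma_\nu = \sum_i \tau_i^2 \in \sum \RR[\underline{X}]^2$ becomes $\sigma_\nu/(p^L e)^2 = \sum_i (\tau_i/(p^L e))^2 \in \sum E^2$, since $\tau_i/(p^L e) \in E$. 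Hence
\[
(f + R) + \frac{\delta}{e^2} = \frac{h + \delta}{e^2} \in T_S^E.
\]
As $\delta \to 0$, the term $\delta/e^2$ vanishes in $\mathcal{T}_\omega$ by continuity of scalar multiplication, so $f + R \in \overline{T_S^E}$. Since the neighborhood $N$ was arbitrary and $R \in N$, letting $N$ shrink gives $f \in \overline{T_S^E}$.

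The main obstacle is the middle step: locating and correctly applying the noncompact Positivstellensatz that matches the growth hypothesis on $p$, since this is where the coercivity assumption is essential. Once $p^{2L}(h + \delta) \in T_S$ is in hand, the transfer to $T_S^E$ via division by a square in $D$ is routine, and the preparatory use of Lemma \ref{comp} serves only to convert the assumption ``$f \geq 0$ on $K_S \setminus K$'' into a genuine nonnegativity statement on all of $K_S$ at the polynomial level.
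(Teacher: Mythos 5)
Your argument is structurally the same as the paper's: use Lemma \ref{comp} to produce a small $R$ with $f+R\succeq 0$ on $K_S$, clear denominators to get a polynomial $h=uv$ nonnegative on $K_S$, feed it to a noncompact Positivstellensatz, and divide back by a square from $D$. Your explicit verification that $h=fe^2+f_R\ge 0$ on all of $K_S$ (including on $\z(e)\cap K_S$, where $h=f_R$ is a sum of squares) is a correct and welcome elaboration of a step the paper states without proof.

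The one genuine inaccuracy is the form of the Positivstellensatz you invoke. For noncompact $K_S$, the hypothesis ``$h+\delta\ge\delta>0$ on $K_S$'' does \emph{not} yield $p^{2L}(h+\delta)\in T_S$: a uniform positive lower bound gives no control at infinity (think of $h$ a Motzkin-type polynomial with $S=\emptyset$, $p=1+\sum X_i^2$; the leading form of $h+\delta$ need not be positive definite, and $(h+\delta)/p^s$ is not bounded away from $0$, so membership in $T_S$ after multiplying by a power of $p$ fails in general). What Schweighofer's Theorem 5.1 (or \cite[Theorem 6.2.3]{mar}) actually gives, under the stated growth hypothesis on $p$, is that $h\ge 0$ on $K_S$ implies $h$ lies in the \emph{closure} of the preordering generated by $S$ in $\RR[\underline{X},\frac 1p]$; concretely, for a suitable fixed $s$ (depending on $\deg h$) and every $\delta>0$ there is $\ell$ with $p^{2\ell}(h+\delta p^{s})\in T_S$. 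This is a harmless repair: dividing by $(p^{\ell}e)^2\in D$ gives $f+R+\delta p^{s}/e^{2}\in T_S^E$, and since $p^{s}/e^{2}$ is a fixed element of $E$, letting $\delta\to 0$ still yields $f+R\in\overline{T_S^E}$, after which your conclusion stands. So the gap is only in the citation, not in the architecture of the proof.
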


\begin{proof}
Take any neighborhood of zero $N$ in $E$. Let $m$ be the minimum
of $f$ on $K_S \cap K$. By Lemma \ref{comp}, there exists $R \in
N$ such that $R \succeq -m\cdot \chi_K$. So we find a
representation $f+R =\frac{u}{v}$ with $uv \geq 0$ on $K_S$. By
\cite[Theorem 5.1]{sch} \footnote{ Note that our assumption on $p$
implies that condition $(\ast)$ of \cite[Theorem 5.1]{sch} is
satisfied. If $p$ satisfies, instead of $p\geq 1$ on $K_S$, the
stronger assumption $p-1 \in T_S$ - e.g. when
$p=1+\sum_{i=1}^nX_i^2$ - then we can use \cite[Theorem 6.2.3]{mar}
(or \cite[Corollary 3.2]{mar2}), which is a slightly weaker version of
\cite[Theorem 5.1]{sch}. }, the polynomial $uv$ belongs to the closure of the
preordering generated by $S$ in $\RR[\underline{X},\frac1p],$ so
also to the closure of $T_S^E$. So $f+R= (\frac{1}{v})^2 uv\in
(f+N)\cap \overline{T_S^E}.$ So $f\in \overline{T_S^E},$ as $N$
was arbitrary.
\end{proof}

In the next Lemma we get rid of the compactness assumption:

\begin{lemma}\label{arb}
Lemma \ref{compa} also holds when $K=\P$.
\end{lemma}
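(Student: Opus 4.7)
The plan is to leverage Lemma \ref{compa} by perturbing $f$ to a nearby polynomial whose bad set on $K_S$ turns out to be a compact subset of $\P$. The Remark after Lemma \ref{l} will then let us pass to the limit in $\epsilon$.

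First I would fix an integer $M$ with $2M > \deg f$ and, for each $\epsilon > 0$, set $h_\epsilon := f + \epsilon p^M \in \RR[\underline{X}]$. The growth hypothesis $kp \ge \sum_{i=1}^n X_i^2$ on $K_S$ gives $\epsilon p(x)^M \ge \epsilon\bigl(|x|^2/k\bigr)^M$ for $x \in K_S$, and the choice of $M$ makes this dominate $|f(x)|$ on $K_S$ outside a sufficiently large ball. Consequently $B_\epsilon := \{x \in K_S \mid h_\epsilon(x) < 0\}$ is bounded.

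Next I would verify that $\overline{B_\epsilon}$ is a compact subset of $\P$. For any $x \in \overline{B_\epsilon}$, closedness of $K_S$ gives $x \in K_S$ and continuity gives $h_\epsilon(x) \le 0$. Since $p \ge 1$ on $K_S$, this forces $f(x) \le -\epsilon p(x)^M \le -\epsilon < 0$, and the hypothesis $f \ge 0$ on $K_S \setminus \P$ then puts $x \in \P$. Thus $\overline{B_\epsilon}$ is closed, bounded, and contained in $\P$, so compact. Applying Lemma \ref{compa} with $K = \overline{B_\epsilon}$, and noting that $h_\epsilon \ge 0$ on $K_S \setminus \overline{B_\epsilon}$, yields $h_\epsilon \in \overline{T_S^E}$.

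Since $f + \epsilon p^M \in \overline{T_S^E}$ holds for every $\epsilon > 0$ and $\overline{T_S^E}$ is a closed preordering by Lemma \ref{l}, the Remark following Lemma \ref{l} (applied to $\overline{T_S^E}$ in place of $T$) concludes $f \in \overline{T_S^E}$. The main obstacle to overcome is that $\P$ need not be closed in $\RR^n$, so merely truncating by the growth of $p$ produces a bounded but not automatically compact bad set; the resolution is that the strictly positive perturbation $\epsilon p^M$ separates $\{h_\epsilon \le 0\} \cap K_S$ from $\{f \ge 0\}$ by at least $\epsilon$, and this cushion is precisely what drags every limit point of $B_\epsilon$ back into $\P$ via the hypothesis on $f$.
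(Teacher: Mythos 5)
Your proof is correct and follows essentially the same strategy as the paper's: perturb $f$ by $\epsilon$ times a suitable power of $p$, check that the resulting negativity set on $K_S$ has compact closure contained in $\P$ (boundedness from the growth of $p$, containment in $\P$ from $f\ge 0$ on $K_S\setminus\P$ together with the $\epsilon$-cushion), apply Lemma \ref{compa}, and let $\epsilon\to 0$ via the Remark after Lemma \ref{l}. The paper uses $f+\epsilon+\epsilon p^{\deg(f)+1}$ and a sublevel set $\{f\le-\epsilon\}$ intersected with a ball where you use $f+\epsilon p^{M}$ with $p\ge 1$ absorbing the additive constant, but these are only cosmetic differences.
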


\begin{proof}
For every $\epsilon>0$, $f +\epsilon p^{\deg(f)+1}$ is nonnegative
on $K_S\setminus B_{\epsilon}$ for a closed ball $B_{\epsilon}$
around $0$. Define
$$K_{\epsilon}:=\left\{a\in K_S\cap B_\epsilon \mid f(x)\leq
-\epsilon\right\}.$$

$K_{\epsilon}$ is a compact subset of  $\P=\bigcup_{d\in
D}\mathcal{Z}(d)$  and $f+\epsilon+\epsilon p^{\deg(f)+1}$ is
nonnegative on $K_S \setminus K_{\epsilon}$.  By Lemma
\ref{compa}, $f +\epsilon +\epsilon p^{\deg(f)+1}$ belongs to the
closure of $T_S^E$ in $E=D^{-1}\RR[\underline{X}]$. As this is
true for all $\epsilon >0$, $f$ belongs to $\overline{T_S^E}$.
\end{proof}

Now we give the

\begin{proof}[Proof of Theorem \ref{main}]
Take $\frac{f}{d} \in E$ with $fd \in \pos(K_S\setminus \P)$. Then
apply Lemma \ref{arb} to obtain $fd \in \overline{T_S^E}$ and
multiply with $(\frac{1}{d})^2.$
\end{proof}

\section{Rational moment problems}
\label{moment}

The aim of this section is to prove the following existence result
for the multidimensional rational $K$-moment problem, which in the
one-dimensional compact case extends \cite[Theorem 5]{jdc}.

\begin{thm}\label{mainmom}  Let $D\subseteq
\RR[\underline{X}]\setminus\{0\}$ be a multiplicative set
containing $1$ and let $S\subseteq\RR[\underline{X}]$ be finite.
Assume there is some $p\in D$ such that $p\geq 1$ on $K_S$, and
$kp-\sum_{i=1}^nX_i^2\geq 0$ on $K_S$ for some $k\geq 1$.

Then,  for every linear functional $L$ on
$E=D^{-1}\RR[\underline{X}]$ such that $L(T_S^E) \ge 0$, there
exists a measure $\mu$ on $\overline{K_S \setminus \bigcup_{d\in
D}\mathcal{Z}(d)}$ such that
\[
L\left(\frac{f}{d}\right) = \int \frac{f}{d} \ d \mu
\]
for every $\frac{f}{d} \in D^{-1}\RR[\underline{X}]$.
\end{thm}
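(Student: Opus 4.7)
The plan is to deduce Theorem \ref{mainmom} from Theorem \ref{main} via the rational Riesz--Haviland theorem (Theorem \ref{rht}) announced in the introduction.

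First, by property (T2), every linear functional on $(E,\mathcal{T}_\omega)$ is continuous. Since $L$ is nonnegative on the convex cone $T_S^E$, continuity forces $L$ to be nonnegative on the closure $\overline{T_S^E}$ (because $\{x\in E : L(x)\ge 0\}$ is a closed halfspace containing $T_S^E$). The growth hypothesis on $p$ is exactly what is needed to invoke Theorem \ref{main}, which identifies
\[
\overline{T_S^E} \;=\; \pos^E\bigl(K_S\setminus\P\bigr), \qquad \P=\bigcup_{d\in D}\z(d).
\]
Combining these, $L(R)\ge 0$ for every $R\in E$ with $R\succeq 0$ on $\x:=K_S\setminus\P$.

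Second, I would apply the rational Riesz--Haviland theorem (Theorem \ref{rht}) to this functional $L$ and to the set $\x$. Its conclusion is precisely that such an $L$ admits a representing positive Borel measure $\mu$ on $\overline{\x}=\overline{K_S\setminus\P}$, in the sense that
\[
L\!\left(\frac{f}{d}\right)=\int \frac{f}{d}\,d\mu\quad\text{for every }\tfrac{f}{d}\in E,
\]
which is exactly the statement of Theorem \ref{mainmom}.

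The main obstacle is therefore not this deduction but setting up and proving Theorem \ref{rht}. The delicate point in any rational Riesz--Haviland theorem is that, for $R=\tfrac{f}{d}\in E$, the representative $\tfrac{f}{d}$ is a priori defined only off $\z(d)$, and $\z(d)$ may meet $\overline{\x}\setminus\x$. Any proof of Theorem \ref{rht} must therefore verify that the produced measure $\mu$ assigns mass zero to $\z(d)\cap\overline{\x}$ for every $d\in D$, so that the integrand is defined $\mu$-almost everywhere and is $\mu$-integrable with the claimed value; the classical Riesz--Haviland argument on each finitely generated subalgebra $\RR[\underline{X}][1/d]$, glued coherently as $d$ ranges over $D$, is the natural route. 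Once Theorem \ref{rht} is in hand, the continuity argument above together with Theorem \ref{main} yields Theorem \ref{mainmom} immediately.
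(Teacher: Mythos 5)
Your proposal is correct and follows essentially the same route as the paper: the paper likewise observes that $L\ge 0$ on $\overline{T_S^E}=\pos^E(K_S\setminus\P)$ by Theorem \ref{main} and continuity of functionals in $\mathcal{T}_\omega$, and then invokes Theorem \ref{rht} with $A=\RR[\underline{X}]$, $\x=\overline{K_S\setminus\bigcup_{d\in D}\z(d)}$ and $q=\sum X_i^2$ (note the theorem must be applied to the \emph{closure}, so that the sets $\x_k$ are compact). The delicate point you flag --- that the representing measure gives mass zero to $\z(\hat d)\cap\x$ for each $d\in D$ --- is exactly what the paper establishes in its rational Riesz lemma.
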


If $K_S$ has empty intersection with $\P=\bigcup_{d \in D}\z(d)$,
then Theorem \ref{mainmom} follows from our Theorem \ref{main} and
\cite[Theorem 3.2.2]{mar}. To prove the general case, we need the
following generalization of \cite[Theorem 3.2.2]{mar} (applied to
$A=\RR[\underline{X}]$, $\x=\overline{K_S \setminus \bigcup_{d \in
D} \z(d)}$, $\hat{a}=a|_\x$ and $q=\sum X_i^2$.)

\begin{thm}[Rational Haviland's Theorem]\label{rht}
Let $A$ be a unital $\RR$ algebra, $D$ a multiplicative subset of
$A$ containing $1$, $\x$ a nonempty Hausdorff topological space
and $\ \hat{} \colon A \to \mathcal{C}(\x,\RR)$ an $\RR$-algebra
homomorphism. Suppose that:
\begin{enumerate}
\item there exists $q \in A$ such that $\hat{q} \ge 0$ on $\x$
and, for each $k \ge 1$, the set $\x_k=\{x \in \x \mid \hat{q}(x)
\le k\}$ is compact, \item for every $d \in D$, the zero set
$\z(\hat{d})$ of $\hat{d}$ in $\x$ has empty interior, \item every
open subset of $\x$ is $\sigma$-compact (= a countable union of
compact sets).
\end{enumerate}
Then, for any linear functional $L  \colon D^{-1} A \to \RR$,
satisfying
\[
\forall a \in A, \forall d \in D, \ \hat{a} \hat{d} \ge 0 \text{
on } \x \Rightarrow L(a/d) \ge 0,
\]
there exists a Borel measure $\mu$ on $\x$ such that
\[
\forall a \in A, \forall d \in D, \ L(a/d)=\int_\x \hat{a}/\hat{d}
\ d \mu.
\]
\end{thm}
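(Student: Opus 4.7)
The plan is to reduce everything to Marshall's Theorem~3.2.2 applied twice: once to $A$ itself, and then to shifted functionals $a \mapsto L(a/d^2)$ for each $d\in D$. First, I restrict $L$ to the copy of $A$ inside $D^{-1}A$ via $a\mapsto L(a/1)$. Specializing the hypothesis to $d=1$ gives that this restriction is positive: $\hat a\ge 0$ on $\x$ implies $L(a)\ge 0$. Hypotheses~(1) and~(3) are exactly the input of the classical Haviland theorem in \cite[Theorem~3.2.2]{mar}, which produces a Borel measure $\mu$ on $\x$ with
$$L(a)=\int_\x \hat a\,d\mu\qquad (a\in A).$$

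For each $d\in D$ I apply the same theorem to the functional $L_d\colon A\to\RR$ defined by $L_d(a):=L(a/d^2)$. Since $\hat a\ge 0$ on $\x$ forces $\hat a\,\hat d^2\ge 0$ on $\x$, the hypothesis on $L$ gives $L_d(a)\ge 0$, so there is a Borel measure $\mu_d$ on $\x$ with $L_d(a)=\int_\x\hat a\,d\mu_d$. Evaluating $L$ two different ways on $ad^2\in A$,
$$\int_\x \hat a\,d\mu=L(a)=L_d(ad^2)=\int_\x\hat a\,\hat d^2\,d\mu_d,$$
so both $\mu$ and $\hat d^2\,d\mu_d$ represent $L|_A$. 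Invoking the uniqueness part of the classical Haviland theorem under (1) and~(3), I would deduce the measure identity $d\mu=\hat d^2\,d\mu_d$. Since $\mu_d$ is locally finite and $\hat d^2$ vanishes precisely on $\z(\hat d)$, which is $\sigma$-compact as a closed subset of $\x$ by~(3), this forces $\mu(\z(\hat d))=0$. Then for any $a/d\in D^{-1}A$, rewriting $a/d=ad/d^2$ gives
$$L(a/d)=L_d(ad)=\int_\x\hat a\,\hat d\,d\mu_d=\int_{\x\setminus\z(\hat d)}\frac{\hat a}{\hat d}\,d\mu=\int_\x\frac{\hat a}{\hat d}\,d\mu,$$
which is the required integral representation.

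The main obstacle is the uniqueness step $d\mu=\hat d^2\,d\mu_d$: Marshall's theorem directly guarantees only existence of a representing measure, and the passage from the functional identity ``$\int\hat a\,d\mu=\int\hat a\,\hat d^2\,d\mu_d$ for every $a\in A$'' to the measure identity requires $\hat A$ to be rich enough to pin down a Borel measure on $\x$. Conditions~(1)--(3) combine for exactly this purpose: the compact exhaustion $\x=\bigcup_k\x_k$ provided by $\hat q$ localizes the question to each $\x_k$, $\sigma$-compactness of open sets lets a standard Riesz--Markov approximation argument identify measures from integrals against continuous functions, and condition~(2) ensures that the sets $\z(\hat d)$ are handled correctly so that $\hat a/\hat d$ is defined $\mu$-a.e. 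Once this measure-theoretic point is pinned down, everything else in the argument is formal bookkeeping.
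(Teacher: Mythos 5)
Your reduction stalls exactly where you say it does, and the obstacle is fatal rather than technical: the classical Haviland theorem \cite[Theorem 3.2.2]{mar} has no uniqueness part, and conditions (1)--(3) do not supply one. Already for $A=\RR[X]$, $\x=\RR$, $q=X^2$ (where (1)--(3) all hold) the Hamburger moment problem is indeterminate, so two Borel measures can have identical integrals against every $\hat a$, $a\in A$, without being equal. Your identity $\int\hat a\,d\mu=\int\hat a\,\hat d^{\,2}\,d\mu_d$ is known only for the unbounded test functions $\hat a$, not for functions in $\mathcal{C}_c(\x,\RR)$, so the Riesz--Markov uniqueness you invoke does not apply; and approximating compactly supported continuous functions by elements of $\hat A$ uniformly on compacta gives no control of the integrals over the noncompact tails without Carleman-type growth hypotheses, which are not assumed. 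Moreover, $\mu$ and $\mu_d$ come from separate applications of a theorem whose proof passes through a non-canonical (Zorn / M.~Riesz) extension step, so there is no reason for them to be compatible. Without $d\mu=\hat d^{\,2}\,d\mu_d$ you obtain neither $\mu(\z(\hat d))=0$ nor the final integral formula.

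The paper circumvents this by performing the extension \emph{once}, on the localized function algebra $\hat D^{-1}\mathcal{C}'(\x,\RR)$ (Lemma \ref{lem1}), so that all the auxiliary functionals $f\mapsto\bar L(f/\hat s)$, $s\in D$, are restrictions of a single positive functional. The compatibility $\mu=\hat s\,d\mu_s$ is then deduced in Lemma \ref{lem2} from the uniqueness part of the Riesz representation theorem on $\mathcal{C}_c(\x,\RR)$ --- a genuine uniqueness statement, unlike Haviland's --- and the crucial vanishing $\mu_s(\z(\hat s))=0$ is proved directly by testing against Urysohn-type functions $u_k\min\{1,1/(i\hat s)\}$, not extracted from a measure identity. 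The passage from $\mathcal{C}_c$ back to $\hat A$ is then handled by the monotone approximation of Lemma \ref{lem3}. To make your two-applications-of-Haviland scheme work you would have to add a determinacy hypothesis, which would substantially weaken the theorem.
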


The idea is to follow the proof of \cite[Theorem 3.2.2]{mar} (or \cite[Theorem 3.1]{mar3})
and use \cite[Theorem 3.2]{v} instead of Riesz's Theorem. We start with a few remarks:

\subsection*{R1} Assumption (1) implies that $\x$ is locally compact.

\subsection*{R2} For every $d \in D$, the following assertions are equivalent:
   \begin{enumerate}
   \item[(a)] $\x \setminus \z(\hat{d})$ is dense in $\x$,
   \item[(b)] $\z(\hat{d})$ has empty interior,
   \item[(c)] for every $f \in \mathcal{C}(\x,\RR)$, if $f \hat{d}^2 \ge 0$ on $\x$ then $f \ge 0$ on $\x$,
   \item[(d)] for every $f \in \mathcal{C}(\x,\RR)$, if $f \hat{d} \equiv 0$ on $\x$ then $f \equiv 0$ on $\x$.
   \end{enumerate}
Every locally compact Hausdorff space is completely regular, which
means that points can be separated from closed sets by continuous
functions (Urysohn's Lemma, see \cite[2.12]{rud}).  So we have
that (d) $\Rightarrow$ (b). Implications (a) $\Leftrightarrow$ (b)
$\Rightarrow$ (c) $\Rightarrow$ (d) are clear.

\subsection*{R3} Assumption (2) implies that $0 \not\in \hat{D}$, hence $0 \not\in D$.
Therefore, $D^{-1}A$ and $\hat{D}^{-1} \mathcal{C}(\x,\RR)$ are
nontrivial $\RR$-algebras.

\subsection*{R4} The  homomorphism $\ \hat{} \colon A \to \mathcal{C}(\x,\RR)$ extends uniquely
to a homomorphism $\ \hat{} \colon D^{-1}A \to \hat{D}^{-1}
\mathcal{C}(\x,\RR)$. Its image is $\widehat{D^{-1} A} =
\hat{D}^{-1} \hat{A}$.

\subsection*{R5} $\bar{L} \colon \hat{D}^{-1} \hat{A} \to \RR$ defined by
$\bar{L} (\hat{a}/\hat{d}) = L(a/d)$ is well-defined. This follows
from the positivity assumption on $L$. (See Claim 1 in the proof
of \cite[Theorem 3.2.2]{mar}.)

\subsection*{R6} We can assume that $\hat{d} \ge 0$ on $\x$ for every $d \in D$.
Namely, every element $f/\hat{d} \in \hat{D}^{-1}
\mathcal{C}(\x,\RR)$ can be written as
$f/\hat{d}=f\hat{d}/\widehat{d^2}$ and $\widehat{d^2} \ge 0$ on
$\x$. The positivity assumption on $L$ then simplifies to: $
\forall a \in A, \forall d \in D, \ \hat{a} \ge 0 \text{ on } \x
\Rightarrow L(a/d) \ge 0. $

\subsection*{R7}Let $B= \hat{D}^{-1} \mathcal{C}'(\x,\RR)$,
where $\mathcal{C}'(\x,\RR)= \{f \in \mathcal{C}(\x,\RR) \mid
\exists \, a \in A \colon |f| \le \hat{a}\text{ on } \x\}$. Note
that $B$ is a subalgebra of $\hat{D}^{-1} \mathcal{C}(\x,\RR)$ and
$B=\{f/\hat{d} \in \hat{D}^{-1} \mathcal{C}(\x,\RR) \mid \exists
\, a\in A, s \in D \colon |f| \hat{s} \le \hat{a}\hat{d} \text{ on
} \x\}$.  Namely, if  $|f| \hat{s} \le \hat{a}\hat{d}$ on $\x$,
then $f/\hat{d}= f\hat{s}/\widehat{ds}$ and  $f \hat{s} \in
\mathcal{C}'(\x,\RR)$. The other inclusion is clear.

\subsection*{R8} The algebra $B$ obviously contains
both $\hat{D}^{-1} \hat{A}$ and  $\hat{D}^{-1}
\mathcal{C}_c(\x,\RR)$. Here, $\mathcal{C}_c(\x,\RR)$ denotes the
algebra of continuous functions with compact support.

\begin{lemma}
\label{lem1} $\bar{L}$ extends to a linear functional $\bar{L}$ on
$B= \hat{D}^{-1} \mathcal{C}'(\x,\RR)$ such that
$\bar{L}(f/\hat{d})\ge 0$ for every $d \in D$ and every
nonnegative $f \in \mathcal{C}'(\x,\RR)$.
\end{lemma}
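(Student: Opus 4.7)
The plan is to extend $\bar{L}$ from $V_0:=\hat{D}^{-1}\hat{A}$ to $V:=B$ via an ordered-vector-space version of the Hahn--Banach theorem. By R6 we may assume $\hat d\ge 0$ on $\x$ for every $d\in D$, and we order $B$ by declaring $f/\hat d\ge 0$ iff $f\ge 0$ on $\x$. The least obvious point is that this relation is well-defined on equivalence classes of fractions: elements of $\hat D$ are regular in $\mathcal{C}(\x,\RR)$ by R2, so $f/\hat d=f'/\hat{d'}$ forces $f\hat{d'}=f'\hat d$ on $\x$; if additionally $f\ge 0$ and both denominators are $\ge 0$, then $f'\hat d\ge 0$ on $\x$, and density of $\x\setminus\z(\hat d)$ (R2) together with continuity of $f'$ yields $f'\ge 0$ on $\x$. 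With this order fixed, $\bar L$ is a positive linear functional on $V_0$ by R5 together with the reformulated positivity hypothesis of R6.

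The decisive structural ingredient is that $V_0$ is order-cofinal in $V$. Indeed, for $g=f/\hat d\in B$ the definition of $\mathcal{C}'(\x,\RR)$ furnishes some $a\in A$ with $|f|\le\hat a$ on $\x$, whence $\hat a/\hat d-g=(\hat a-f)/\hat d\ge 0$ in $B$, so $g\le\hat a/\hat d\in V_0$. Replacing $f$ by $-f$ in the same argument yields a lower bound $-\hat a/\hat d\le g$ in $V_0$ as well.

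Now define the sublinear functional
\[
p(g)=\inf\bigl\{\bar L(h)\mid h\in V_0,\ g\le h\bigr\}.
\]
Upper cofinality gives $p(g)<\infty$. For $p(g)>-\infty$, pick $h'\in V_0$ with $-g\le h'$; then for any $h\in V_0$ with $g\le h$ one has $h+h'\ge 0$ in $V$, and since $h+h'\in V_0$ the positivity of $\bar L$ on $V_0$ forces $\bar L(h)\ge-\bar L(h')$. Standard checks show $p$ is subadditive, positively homogeneous, and agrees with $\bar L$ on $V_0$ (the coincidence on $V_0$ again uses positivity of $\bar L$ on $V_0$). The Hahn--Banach theorem then extends $\bar L$ to a linear functional on $V$ dominated by $p$; for any $g\ge 0$ in $B$, the inequality $-g\le 0\in V_0$ yields $p(-g)\le\bar L(0)=0$, hence $\bar L(g)\ge 0$, so the extension is positive, as required. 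The only real subtlety, as flagged above, is the well-definedness of the order on the localization $B$; the rest is the standard cofinal-subspace form of Hahn--Banach.
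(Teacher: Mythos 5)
Your argument is correct and is essentially the paper's approach: the paper simply cites the extension theorem for positive functionals on a cofinal subspace of an ordered vector space (Claim 2 of \cite[Theorem 3.2.2]{mar}, resp.\ \cite[Theorem 2.6.2]{ak}), and your Hahn--Banach argument with $p(g)=\inf\{\bar L(h)\mid h\in V_0,\ g\le h\}$ is precisely the standard proof of that theorem. You also correctly isolate and settle the one point specific to this setting, namely that the order on the localization $B$ is well defined, which rests on R2 and R6 exactly as you say.
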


\begin{proof}
The proof is similar as the proof of Claim 2 in the proof of
\cite[Theorem 3.2.2]{mar}.) This result is also a special case of
a more general extension theorem for positive functionals, see
\cite[Theorem 2.6.2, p. 69]{ak}.
\end{proof}

\begin{lemma}[Rational Riesz's Theorem]
\label{lem2} For every linear functional $\bar{L} \colon
\hat{D}^{-1} \mathcal{C}_c(\x,\RR) \to \RR$ such that
$\bar{L}(f/\hat{d}) \ge 0$ for every nonnegative $f \in
\mathcal{C}_c(\x,\RR)$ there exists a unique Borel measure $\mu$
on $\x$ such that $\bar{L}(f/\hat{d}) = \int_\x f/\hat{d} \ d \mu$
for every $f/\hat{d} \in \hat{D}^{-1} \mathcal{C}_c(\x,\RR)$.
\end{lemma}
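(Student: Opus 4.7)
The plan is to reduce everything to the classical Riesz representation theorem, applied twice to positive linear functionals on $\mathcal{C}_c(\x,\RR)$ built from $\bar L$. First, for $f\in\mathcal{C}_c(\x,\RR)$ set $M(f)=\bar L(f/\hat 1)$; this is linear, and positive by the hypothesis on $\bar L$ with $d=1$. Since $\x$ is locally compact Hausdorff (R1) with every open set $\sigma$-compact (assumption (3)), the classical Riesz theorem yields a unique Borel (Radon) measure $\mu$ on $\x$ such that $M(f)=\int_\x f\,d\mu$ for all $f\in\mathcal{C}_c(\x,\RR)$. This $\mu$ will be the representing measure.

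Next, for each $d\in D$ define $M_d\colon\mathcal{C}_c(\x,\RR)\to\RR$ by $M_d(f)=\bar L(f/\hat d^2)$. Since $d^2\in D$ and $\hat d^2\ge 0$, positivity of $\bar L$ makes $M_d$ a positive linear functional, so Riesz supplies a Borel measure $\nu_d$ with $M_d(f)=\int_\x f\,d\nu_d$. The compatibility between $\mu$ and $\nu_d$ comes from the trivial localization identity $f\hat d^2/\hat d^2=f/\hat 1$ in $\hat D^{-1}\mathcal{C}_c(\x,\RR)$: applying $\bar L$ gives $M_d(f\hat d^2)=M(f)$ for every $f\in\mathcal{C}_c(\x,\RR)$, i.e.\ $\int_\x f\hat d^2\,d\nu_d=\int_\x f\,d\mu$, so Riesz uniqueness forces the measure-theoretic identity $d\mu=\hat d^2\,d\nu_d$.

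The main technical point, and the step I expect to be the principal obstacle to articulate carefully, is to show $\mu(\z(\hat d))=0$ for every $d\in D$, so that $f/\hat d$ is defined $\mu$-a.e.\ and the right-hand side of the desired formula makes sense. But once the compatibility $d\mu=\hat d^2\,d\nu_d$ is in hand, this drops out for free: $\mu(\z(\hat d))=\int_{\z(\hat d)}\hat d^2\,d\nu_d=0$, since $\hat d$ vanishes identically on $\z(\hat d)$.

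To conclude, for $f\in\mathcal{C}_c(\x,\RR)$ and $d\in D$, use the localization identity $f/\hat d=f\hat d/\hat d^2$ and compute
\[
\bar L(f/\hat d)=M_d(f\hat d)=\int_\x f\hat d\,d\nu_d=\int_\x (f/\hat d)\,\hat d^2\,d\nu_d=\int_\x (f/\hat d)\,d\mu,
\]
the third equality being pointwise on $\{\hat d>0\}$ with both integrands vanishing on $\{\hat d=0\}$, and the fourth invoking $d\mu=\hat d^2\,d\nu_d$. Uniqueness of $\mu$ follows immediately from Riesz uniqueness applied to $M$.
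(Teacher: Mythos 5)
Your argument is correct, and its overall architecture is the same as the paper's (which follows Vasilescu): freeze a denominator, apply the classical Riesz theorem to the resulting positive functional on $\mathcal{C}_c(\x,\RR)$, and use the uniqueness part of Riesz to identify $\mu$ with the density-weighted measure. The genuine difference is how you handle the zero sets, and your route is simpler. The paper sets $\bar L_s(f)=\bar L(f/\hat s)$ (with $\hat s\ge 0$ after replacing $s$ by $s^2$), represents it by a measure $\mu_s$, and then must prove $\mu_s(\z(\hat s))=0$ by a Urysohn-function truncation argument, because its final computation passes through the equality $\int_{\x\setminus\z(\hat s)}f\,d\mu_s=\int_\x f\,d\mu_s$ and $f$ need not vanish on $\z(\hat s)$. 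You instead evaluate $M_d$ at $f\hat d$, whose vanishing on $\z(\hat d)$ makes that issue vacuous; the only null-set fact you need is $\mu(\z(\hat d))=0$, which, as you say, is immediate from $d\mu=\hat d^2\,d\nu_d$. So the truncation argument disappears entirely. The one point you should spell out is the step ``Riesz uniqueness forces $d\mu=\hat d^2\,d\nu_d$'': uniqueness in Riesz's theorem holds among \emph{regular} Borel measures, so you must observe that $\hat d^2\,d\nu_d$ is a Borel measure, finite on compact sets (since $\hat d^2$ is bounded there and $\nu_d$ is Radon), and hence regular because every open subset of $\x$ is $\sigma$-compact by assumption (3) (the paper cites Rudin, Theorems 1.29 and 2.18 for exactly this). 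With that sentence added, your proof is complete, and arguably cleaner than the original.
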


\begin{proof} (Based on the proof of \cite[Theorem 3.2]{v}.)
For every $s \in D$ define a functional $\bar{L}_s \colon
\mathcal{C}_c(\x,\RR) \to \RR$ by $\bar{L}_s
(f)=\bar{L}(f/\hat{s})$. Since $\bar{L}_s(f) \ge 0$ for every $f
\ge 0$, there exists by Riesz's Theorem, see \cite[Theorem
2.14]{rud},  a unique regular Borel measure $\mu_s$ on $\x$ such
that $\bar{L}_s(f) = \int_{\x} f \ d \mu_s$ for every $f \in
\mathcal{C}_c(\x,\RR)$. Write $\mu=\mu_1$ and note that
\[
\int_{\x} f \, d \mu = \int_{\x} f \hat{s} \ d \mu_s
\]
for every $f \in \mathcal{C}_c(\x,\RR)$. By \cite[Theorem
1.29]{rud}, $\hat{s} \ d \mu_s$ is a Borel measure on $\x$. To
prove that $\hat{s} \ d \mu_s$ is regular, we apply \cite[Theorem
2.18]{rud}. (We need here our assumption (3) and continuity of
$\hat{s}$.) By the uniqueness part of Riesz's Theorem,
$\mu=\hat{s} \ d \mu_s$.

Now, we will prove that $\mu_s(\z(\hat{s})) = 0$. By a version of
Urysohn's Lemma (see \cite[Theorem 2.12]{rud}), there exists for
each integer $k$ a function $u_k \in \mathcal{C}_c(\x,\RR)$ such
that $0 \le u_k \le 1$ on $\x$ and $u_k=1$ on $\z(\hat{s}) \cap
\x_k$. For all integers $i,k$, the function $f_{k,i}=u_k \min\{1,
1/i \hat{s}\}$ belongs to $\mathcal{C}_c(\x,\RR)$, $0 \le f_{k,i}
\le 1$ on $\x$ and $f_{k,i}=1$ on $\z(\hat{s}) \cap \x_k$. It
follows that
\[
\mu_s(\z(\hat{s}) \cap \x_k) \le \int_{\x} f_{k,i} \ d \mu_s =
\]
\[
=\bar{L}_s(f_{k,i}) = \bar{L}(f_{k,i}/\hat{s}) \le (1/i)
\bar{L}(u_k/\hat{s}^2).
\]
Sending $i \to \infty$, we get $\mu_s(\z(\hat{s}) \cap \x_k) = 0$
for every $k$, hence $\mu_s(\z(\hat{s})) = 0$. In particular,
$\mu(\z(\hat{s}))=0$ (which is also a consequence of $\mu=\hat{s}
\ d \mu_s$). Finally, for every $f \in  \mathcal{C}_c(\x,\RR)$ and
every $s \in D$ we have that
\[
\int_{\x} f/\hat{s} \ d \mu = \int_{\x \setminus \z(\hat{s})}
f/\hat{s} \ d \mu = \int_{\x \setminus \z(\hat{s})} (f/\hat{s})
\hat{s} \ d \mu_s =
\]
\[
=\int_{\x \setminus \z(\hat{s})} f \ d \mu_s = \int_{\x} f \ d
\mu_s = \bar{L}_s(f) = \bar{L}( f/\hat{s}).
\]

\end{proof}

\begin{lemma}
\label{lem3} For every nonnegative function $f \in
\mathcal{C}'(\x,\RR)$, there exists a monotonically increasing
sequence of nonnegative functions $f_i \in \mathcal{C}_c(\x,\RR)$
such that $\frac{(f+\hat{q})^2}{i} \ge f-f_i \ge 0$ on $\x$ for
every $i$.
\end{lemma}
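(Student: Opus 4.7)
The plan is to define $f_i := f \cdot \psi_i$, where $\psi_i$ is a monotone increasing sequence of compactly supported continuous cutoffs, each identically $1$ on a suitable compact sublevel set of $\hat q$. By assumption~(1) the sets $\x_k = \{x\in\x : \hat q(x)\le k\}$ are compact, and since $\hat q$ takes finite values they exhaust $\x$. By R1, $\x$ is locally compact Hausdorff, so Urysohn's Lemma in the form \cite[Theorem 2.12]{rud} yields functions $\varphi_i \in \mathcal{C}_c(\x,\RR)$ with $0\le\varphi_i\le 1$ and $\varphi_i\equiv 1$ on $\x_{\lceil i/4\rceil}$. To guarantee monotonicity I would set
\[
\psi_i := \max(\varphi_1,\ldots,\varphi_i),
\]
which again lies in $\mathcal{C}_c(\x,\RR)$ with $0\le\psi_i\le\psi_{i+1}\le 1$ and $\psi_i\equiv 1$ on $\x_{\lceil i/4\rceil}$. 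Then $f_i = f\psi_i \in \mathcal{C}_c(\x,\RR)$ (since $\mathop{\mathrm{supp}}(f_i)\subseteq\mathop{\mathrm{supp}}(\psi_i)$ is compact), $f_i\ge 0$, $f_i\le f_{i+1}$, and $f-f_i = f(1-\psi_i)\ge 0$ on all of $\x$.

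It remains to verify the upper bound $f-f_i \le (f+\hat q)^2/i$. On $\x_{\lceil i/4\rceil}$ this is trivial because $\psi_i\equiv 1$ there, so $f-f_i=0$. On the complement $\x\setminus\x_{\lceil i/4\rceil}$ we have $\hat q(x) > \lceil i/4\rceil \ge i/4$, so applying the elementary inequality $(a+b)^2\ge 4ab$ to the nonnegative values $a=f(x)$ and $b=\hat q(x)$ gives
\[
(f+\hat q)^2 \ \ge\ 4 f\hat q \ \ge\ i\,f \ \ge\ f-f_i,
\]
and dividing by $i$ produces the desired bound.

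The only subtle choice is the threshold $\lceil i/4\rceil$, which is tuned precisely so that the quadratic term $(f+\hat q)^2$ absorbs $f$ on the ``tail'' $\x\setminus\x_{\lceil i/4\rceil}$ without any pointwise control on $f$ itself; in particular the bound $|f|\le\hat a$ coming from $f\in\mathcal{C}'(\x,\RR)$ is not invoked in this construction, only continuity and nonnegativity of $f$ together with the properness property of $\hat q$ in assumption~(1). I do not anticipate any serious obstacle; the only point requiring mild care is using the locally compact Hausdorff version of Urysohn's Lemma (rather than the compact one) to obtain cutoffs of compact support.
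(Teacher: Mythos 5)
Your construction is correct, and it is essentially the argument the paper is invoking by citation (Claim 3 in the proof of \cite[Theorem 3.2.2]{mar}): Urysohn cutoffs supported near the compact sublevel sets $\x_k$ of $\hat{q}$, combined with the inequality $(f+\hat{q})^2 \ge 4f\hat{q}$ on the complement. The only cosmetic remark is that the threshold $\lceil i/4\rceil$ is more careful than necessary --- taking $\varphi_i \equiv 1$ on $\x_i$ already gives $(f+\hat{q})^2 \ge 4f\hat{q} \ge if \ge i(f-f_i)$ off $\x_i$ --- but this changes nothing.
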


This is Claim 3 in the proof of \cite[Theorem 3.2.2]{mar}. No
changes are needed.

\renewcommand{\proofname}{Proof of the Theorem}
\begin{proof}
Let $\bar{L}$ be a linear functional on $\hat{D}^{-1} \hat{A}$
such that $\bar{L}(f/\hat{d}) \ge 0$ for every nonnegative $f \in
\hat{A}$ and every $d\in D$. By Lemma \ref{lem1}, we can extend
$\bar{L}$ to a linear functional $\bar{L}$ on $\hat{D}^{-1}
\mathcal{C}'(\x,\RR)$ such that $\bar{L}(f/\hat{d}) \ge 0$ for
every nonnegative $f \in \mathcal{C}'(\x,\RR)$. Lemma \ref{lem2}
gives us a Borel measure $\mu$ on $\x$ such that
$\bar{L}(f/\hat{d})=\int_{\x} f/\hat{d} \ d \mu$ for every $f \in
\mathcal{C}_c(\x,\RR)$. Lemma \ref{lem3} implies that
$\bar{L}(f/\hat{d})=\int_{\x} f/\hat{d} \ d \mu$ for every $f \in
\mathcal{C}'(\x,\RR)$. Namely,
\[
\int_{\x} f/\hat{d} \ d \mu=\lim\limits_{i \to \infty}\int_{\x}
f_i/\hat{d} \ d \mu =\lim\limits_{i \to \infty}
\bar{L}(f_i/\hat{d}) = \bar{L}(f/\hat{d}),
\]
by the Monotone Convergence Theorem, the inequality
$\frac{(f+\hat{q})^2}{i} \ge f-f_i \ge 0$ and the positivity of
$\bar{L}$. In particular
$L(a/d)=\bar{L}(\hat{a}/\hat{d})=\int_{\x} \hat{a}/\hat{d} \ d
\mu$ for every $a \in \A$ and $d \in D$.
\end{proof}
\renewcommand{\proofname}{Proof}

\section{Preorderings and the topology of finitely open sets}
\label{variant}

\subsection*{Definition}
Let $E$ be an $\RR$-vector space and $O$ a subset of $E$. We say
that $O$ is \text{open} in the \textit{topology of finitely open
sets} if for every finite-dimensional vector subspace $U$ of $E$,
$O \cap U$ is open in the usual topology of $U$. Write
$\mathcal{T}_{\fin}$ for the set of all open sets.

\bigskip

We recall the following results from \cite{b}.

\begin{enumerate}
\item[(F1)] If $E$ has countable dimension, then the topology
$\mathcal{T}_{\fin}$ is equal to the topology
$\mathcal{T}_{\omega}$. If $E$ does not have countable dimension,
then the topology $\mathcal{T}_{\fin}$ is strictly finer (=has
more open sets) than the topology $\mathcal{T}_{\omega}$. In
particular, $\mathcal{T}_{\fin}$ is not a locally convex topology
in this case. \item[(F2)] If $E$ does not have countable
dimension, then the addition is not continuous in the topology
$\mathcal{T}_{\fin}$. \item[(F3)] Let $E$ be an $\RR$-vector space
and $\mathcal{D}$ the set of all countable-dimensional subspaces
of $E$. Then, for every subset $A$ of $E$
\[
\overline{A}=\bigcup_{D \in \mathcal{D}} \overline{A \cap D}.
\]
\end{enumerate}

The following should be well-known but we were unable to find a reference:

\begin{enumerate}
\setcounter{enumi}{3}
\item[(F4)] Any linear mapping between any two vector spaces
both having the topology of finitely open sets is continuous. In particular,
every functional on every vector space with the finitely open topology is continuous.
%\item[(F5)] The relative topology on a subspace of $E$ is again the topology of finitely open sets.
\end{enumerate}

\begin{proof}
Let $L \colon (E, \mathcal{T}_{\fin}) \to (E', \mathcal{T}_{\fin}')$ be
a linear mapping and $O \in \mathcal{T}_{\fin}'$. Pick any finite-dimensional
subspace $V$ of $E$. Clearly, the subspace $L(V)$ of $E'$ is also finite-dimensional.
By the definition of $\mathcal{T}_{\fin}'$, $O \cap L(V)$ is open in the usual
topology of $L(V)$. Every linear mapping between two finite-dimensional
vector spaces is continuous in the usual topologies. In particular
$L|_V \colon V \to L(V)$ is continuous in the usual topologies of $V$ and $L(V)$.
It follows, that the set $(L|_V)^{-1}(O \cap L(V))$ is open in the usual
topology of $V$. Note that $(L|_V)^{-1}(O \cap L(V))=L^{-1}(O) \cap V$,
hence $L^{-1}(O) \cap V$ is open in the usual topology of $V$.
Since $V$ was arbitrary, it follows by the definition of $\mathcal{T}_{\fin}$
that $L^{-1}(O)$ is open. Since $O$ was arbitrary, $L$ is indeed continuous.
\end{proof}

\begin{lemma}
\label{finclos}
For a preordering $T$ on a commutative $\RR$-algebra $E$, its
closure in $\mathcal{T}_{\fin}$ is also a preordering.
\end{lemma}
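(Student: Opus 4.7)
The plan is to imitate the proof of Lemma \ref{l}: show that $\overline{T}$ contains $E^2$, is closed under multiplication, and is closed under addition. The inclusion $E^2 \subseteq T \subseteq \overline{T}$ is immediate, and the multiplication argument of Lemma \ref{l} transfers verbatim once one replaces (T2) by (F4). Namely, for each $f \in E$ the left multiplication $\phi_f(h) = fh$ is linear, hence $\mathcal{T}_{\fin}$-continuous by (F4), so $\phi_f(\overline{T}) \subseteq \overline{\phi_f(T)}$. Applying this first to $f \in T$ gives $\phi_f(\overline{T}) \subseteq \overline{T}$, and then applying it to $g \in \overline{T}$ (using the previous step to see $\phi_g(T) \subseteq \overline{T}$) yields $\phi_g(\overline{T}) \subseteq \overline{T}$ for all $g \in \overline{T}$.

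The main obstacle is closure under addition: by (F2), addition is not $\mathcal{T}_{\fin}$-continuous once $E$ has uncountable dimension, so the one-line argument used in Lemma \ref{l} is not available. My plan is to localize to a countable-dimensional subspace using (F3). Given $a, b \in \overline{T}$, (F3) produces countable-dimensional $D_1, D_2 \in \mathcal{D}$ with $a \in \overline{T \cap D_1}$ and $b \in \overline{T \cap D_2}$. Setting $D := D_1 + D_2$ (still in $\mathcal{D}$) and using monotonicity of closure, both $a$ and $b$ lie in $\overline{T \cap D}$.

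On $D$ I can recover continuous addition: by (F1) the topology $\mathcal{T}_{\fin}$ on $D$ coincides with $\mathcal{T}_{\omega}$ on $D$, and in the latter addition is continuous by (T1). A short check---one inclusion is trivial from the definition, the reverse uses (F4) applied to a linear projection $E \to D$---identifies this intrinsic topology on $D$ with the subspace topology inherited from $(E, \mathcal{T}_{\fin})$, so the closure of $T \cap D$ computed inside $D$ agrees with $\overline{T \cap D}$. Since $(T \cap D) + (T \cap D) \subseteq T \cap D$ (as $T$ is a preordering and $D$ a vector subspace), continuity of addition in $D$ forces $a + b \in \overline{T \cap D}$; one final application of (F3) delivers $a + b \in \overline{T}$, completing the plan.
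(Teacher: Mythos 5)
Your proposal is correct and follows essentially the same route as the paper: multiplication is handled exactly as in Lemma \ref{l} with (T2) replaced by (F4), and addition is handled by passing via (F3) to the countable-dimensional subspace $D_1+D_2$, where $\mathcal{T}_{\fin}$ and $\mathcal{T}_{\omega}$ coincide by (F1) and addition is continuous. Your extra remark identifying the intrinsic finitely open topology on $D$ with the subspace topology (via a projection and (F4)) is a legitimate point the paper leaves implicit, but it does not change the argument.
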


\begin{proof}
For any $f_1,f_2 \in \overline{T}$, there exist, by (F3), countable-dimensional
subspaces $D_1,D_2$ of $E$ such that $f_i \in \overline{T \cap D_i}$ for $i=1,2$.
The subspace $D=D_1+D_2$ is also countable-dimensional and
$\overline{T \cap D_i} \subseteq \overline{T \cap D}$ for $i=1,2$.
It follows that $f_1,f_2 \in \overline{T \cap D}$. Since $D$ is
countable-dimensional, $\overline{T \cap D}$ is closed under addition,
hence $f_1+f_2 \in \overline{T \cap D} \subseteq \overline{T}$.
The proof that $\overline{T}$ is closed under multiplication
follows again from the fact, that $u\mapsto ru$ is continuous for all $r\in E$.
(See the proof of Lemma \ref{l}).
\end{proof}

\begin{lemma}\label{polyrat}
Let $D\subseteq\RR[\underline{X}]\setminus\{0\}$ be a
multiplicative set containing $1$. Then a preordering $T$ in
$E=D^{-1}\RR[\underline{X}]$ is closed if and only if $T\cap
\RR[\underline{X}]$ is closed in $\RR[\underline{X}]$.
\end{lemma}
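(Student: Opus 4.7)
The plan is to use directly the defining property of $\mathcal{T}_{\fin}$: a set $C$ in a real vector space $W$ is closed in $\mathcal{T}_{\fin}$ if and only if $C\cap U$ is closed in the usual Euclidean topology on $U$ for every finite-dimensional subspace $U\subseteq W$. Both directions of the lemma will then reduce to comparing finite-dimensional slices of $T$ and $T_0 := T\cap\RR[\underline{X}]$. The ``only if'' direction is immediate: any finite-dimensional subspace $U'\subseteq\RR[\underline{X}]$ is also a finite-dimensional subspace of $E$, and $T_0\cap U' = T\cap U'$, which is closed in $U'$ by hypothesis.

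The ``if'' direction is the substantive one. Fix any finite-dimensional $U\subseteq E$ and choose a basis $e_i = f_i/d_i$, $i=1,\ldots,k$. Setting $d := d_1\cdots d_k\in D$, each $d e_i$ lies in $\RR[\underline{X}]$, so $d^2U$ is a finite-dimensional subspace of $\RR[\underline{X}]$. The key step is to verify the identity
\[
T\cap U \;=\; \frac{1}{d^2}\bigl(T_0\cap d^2U\bigr),
\]
equivalently: for $u\in U$, one has $u\in T$ if and only if $d^2 u\in T_0$. The forward implication uses $d^2\in T$ and multiplicativity of $T$ together with $d^2 u\in\RR[\underline{X}]$; the reverse uses $(1/d)^2\in E^2\subseteq T$ and multiplicativity. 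Given the identity, closedness of $T_0$ in $\RR[\underline{X}]$ makes $T_0\cap d^2U$ closed in $d^2U$, and the linear bijection $u\mapsto d^2u$ between the finite-dimensional spaces $U$ and $d^2U$ is automatically a homeomorphism in the Euclidean topologies, so $T\cap U$ is closed in $U$. Arbitrariness of $U$ then gives that $T$ is closed in $E$.

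The only nontrivial ingredient is the displayed identity, and it works precisely because $T$ is a preordering and therefore contains all squares from $E$---in particular both $d^2$ and $(1/d)^2$. No appeal to (F1)--(F4) or Lemma \ref{finclos} is needed, though one could alternatively derive the ``if'' direction more abstractly from (F3).
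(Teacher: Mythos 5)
Your proof is correct and takes essentially the same approach as the paper's: both reduce closedness to finite-dimensional slices, clear denominators by multiplying by a square $d^2\in D$, and exploit the identity $T\cap U=\frac{1}{d^2}\bigl((T\cap\RR[\underline{X}])\cap d^2U\bigr)$ (the paper phrases it as $d(V\cap T)=dV\cap T$ using $T=dT$) together with the fact that multiplication by $d^2$ is a linear homeomorphism between finite-dimensional subspaces. The only cosmetic difference is that you justify the homeomorphism by elementary finite-dimensional linear algebra where the paper cites (F4).
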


\begin{proof} One direction is obvious. So assume that $T \cap \RR[\underline{X}]$
is closed in $\RR[\underline{X}]$. Let $V$ be a finite-dimensional
subspace of $E$. We claim that $V \cap T$ is closed. Pick a
nonzero square $d\in D$ such that $dV \subseteq
\RR[\underline{X}]$. Then
\[
d(V \cap T) = dV \cap dT= dV \cap T.
\]
The first equality follows from the fact that the mapping $u
\mapsto du$ is bijective on $E$ and the second from $T=dT$ ($d$ is
an invertible square in $E$). Since $T \cap \RR[\underline{X}]$ is
closed and $dV\subseteq \RR[\underline{X}]$, $d(V \cap T)$ is
closed in $dV$. As the mapping $u \mapsto du$ is a homeomorphism
by (F4), it follows that the set $V \cap T$ is closed in $V$. As
$V$ was arbitrary, $T$ is closed in $E$.
\end{proof}

The next Lemmas are needed in the proof of the main theorem of
this section.  See \cite[Chapter 2]{bcr} for general notions such
as semialgebraic set, semialgebraic homeomorphism, semialgebraic
dimension etc.

\begin{lemma}\label{decomp} Let $K\subseteq\RR^n$ be a semialgebraic set. Then
there are finitely many semialgebraic subsets $C_1,\ldots,C_s$ of
$K$, such that \begin{itemize} \item[(i)] $K=C_1\cup \ldots \cup
C_s$, \item[(ii)] each $C_i$ is semialgebraically homeomorphic to some
$\RR^{d_i}$, \item[(iii)] each $C_i$ has irreducible Zariski
closure.
\end{itemize}\end{lemma}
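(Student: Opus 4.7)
The plan is to prove this by induction on the semialgebraic dimension $d = \dim K$, combining the irreducible decomposition of the Zariski closure $\overline{K}^{\mathrm{Zar}}$ with the semialgebraic cell decomposition theorem of \cite[Chapter 2]{bcr}. The base case $d = 0$ is trivial: $K$ is then a finite set of points, each semialgebraically homeomorphic to $\RR^{0}$ and having a singleton (hence irreducible) Zariski closure.

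For the inductive step, I would first decompose $\overline{K}^{\mathrm{Zar}} = W_{1}\cup\cdots\cup W_{m}$ into irreducible components and set
\[
K_{i} := K\cap\Bigl(W_{i}\setminus\bigcup_{j\neq i}W_{j}\Bigr),\qquad L := K\setminus\bigsqcup_{i}K_{i}.
\]
Since distinct irreducible components of an algebraic set intersect in a proper closed subvariety of each, $\dim(W_{i}\cap W_{j})<\min(\dim W_{i},\dim W_{j})\leq d$ for $i\neq j$, so $L$ is semialgebraic of dimension strictly less than $d$.

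Next I would apply the semialgebraic cell decomposition theorem, adapted to each $K_{i}$, to get $K_{i}=\bigsqcup_{\alpha}C_{i,\alpha}$ where every $C_{i,\alpha}$ is semialgebraically homeomorphic to some $\RR^{d_{i,\alpha}}$. Separate the cells of $K_{i}$ into \emph{top-dimensional} ones (those with $d_{i,\alpha}=\dim W_{i}$) and lower-dimensional ones. For a top-dimensional cell $C\subseteq K_{i}$, the Zariski closure $\overline{C}^{\mathrm{Zar}}$ is contained in $W_{i}$ and has dimension $\dim W_{i}$; by irreducibility of $W_{i}$ this forces $\overline{C}^{\mathrm{Zar}}=W_{i}$, which is irreducible. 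Collect all top-dimensional cells from all of the $K_{i}$ as the first batch of the desired $C_{i}$'s. The remaining lower-dimensional cells, together with $L$, form a semialgebraic set $K'$ of dimension strictly less than $d$. Applying the induction hypothesis to $K'$ gives a further decomposition of $K'$ into cells with irreducible Zariski closure, and concatenating the two batches produces the required decomposition of $K$.

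The main obstacle is the Zariski-closure computation for the top-dimensional cells: one must verify (a) that an irreducible closed subvariety of $W_{i}$ having the same dimension as $W_{i}$ necessarily equals $W_{i}$ (using irreducibility of $W_{i}$), and (b) that $\bigcup_{i\neq j}(W_{i}\cap W_{j})$ drops in dimension (using that a proper closed subvariety of an irreducible variety has strictly smaller dimension). Both are standard, but they are the only non-formal inputs beyond the cell decomposition theorem itself.
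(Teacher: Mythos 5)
Your proof is correct and follows essentially the same route as the paper's: induction on the semialgebraic dimension, irreducible decomposition of the Zariski closure combined with cell decomposition, the observation that a top-dimensional cell inside an irreducible component has that component as its Zariski closure, and the induction hypothesis applied to the lower-dimensional leftover. Your extra step of first removing the pairwise intersections $W_i\cap W_j$ is harmless but not needed (the paper works directly with $K\cap V_i$, and a top-dimensional cell cannot lie in an intersection of components anyway); the only point worth tightening in both arguments is that $\overline{C}^{\mathrm{Zar}}$ is not a priori irreducible, so one should pass to a full-dimensional irreducible component of it before invoking your step (a).
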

\begin{proof}
The proof is by induction on the semialgebraic dimension of $K$.
If $\dim(K)=0$, the result is clear, as $K$ is a finite union of
points. So assume $\dim(K)=m$ and the result is true for all
semialgebraic sets of smaller dimension. Let $V$ be the Zariski
closure of $K$ and $V_1,\ldots, V_r$ its irreducible components.
We consider with out loss of generality only  $K_1:= K\cap V_1$.
The usual semialgebraic cell decomposition allows us to write
$$K_1= \tilde{C}_1\cup\ldots\cup \tilde{C}_k,$$ where each
$\tilde{C}_j$ is a semialgebraic set and semialgebraically
homeomorphic to some $\RR^{d_j}$. Now if a  $\tilde{C}_j$  has
the same dimension as $V_1$, its Zariski closure equals $V_1$, due
to the irreducibility of $V_1$. So we use all these $C_j$  for our
desired decomposition of $K$. The union of all lower dimensional
$C_j$ is a semialgebraic set of strictly smaller dimension than
$m$, so we can apply the induction hypothesis to it. This yields
the result.
\end{proof}

\begin{lemma}
\label{dk} For every semialgebraic set $K$ in $\RR^n$ and every
nonempty multiplicative subset $D$ of $\RR[\underline{X}]$, there
exists an element $d_K \in D$ such that $\overline{K \setminus
\z(d_K)} \subseteq \overline{K \setminus \z(d)}$ for every $d \in
D$.
\end{lemma}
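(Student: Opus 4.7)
The plan is to reduce the problem to the irreducible pieces supplied by Lemma \ref{decomp}. Write $K = C_1 \cup \cdots \cup C_s$ with each $C_i$ semialgebraically homeomorphic to some $\RR^{d_i}$ and with irreducible Zariski closure $V_i$. Partition the index set $\{1,\ldots,s\}$ into
\[
I_A = \{i : \exists\, d \in D \text{ with } d|_{V_i} \equiv 0\}, \qquad I_B = \{i : \forall\, d \in D,\ d|_{V_i} \not\equiv 0\}.
\]
The intuition is that the $C_i$ of type (A) can be entirely swept into $\z(d_K)$, whereas the $C_i$ of type (B) contribute the same closure $\overline{C_i}$ regardless of which $d \in D$ we remove, so the minimum closure is realised by killing all the type (A) contributions simultaneously.

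Construct $d_K$ as follows: for each $i \in I_A$ pick a witness $d_i \in D$ with $V_i \subseteq \z(d_i)$, and set $d_K = \prod_{i \in I_A} d_i$, or, if $I_A = \emptyset$, take $d_K$ to be any element of $D$ (guaranteed by nonemptiness). Multiplicativity of $D$ gives $d_K \in D$.

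The key identity to establish is
\[
\overline{K \setminus \z(d_K)} \;=\; \bigcup_{i \in I_B} \overline{C_i},
\]
together with the containment $\overline{K \setminus \z(d)} \supseteq \bigcup_{i \in I_B} \overline{C_i}$ for every $d \in D$. The latter rests on a standard density argument: for $i \in I_B$ the intersection $\z(d) \cap V_i$ is a proper Zariski-closed subset of the irreducible variety $V_i$ and so has strictly smaller dimension than $V_i$; since $C_i$ and $V_i$ have equal dimension and $C_i \cong \RR^{d_i}$, a lower-dimensional semialgebraic subset of $C_i$ has empty Euclidean interior in $C_i$. Hence $C_i \setminus \z(d)$ is dense in $C_i$ and $\overline{C_i \setminus \z(d)} = \overline{C_i}$, from which the required inclusion drops out immediately.

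The delicate step is guaranteeing that $d_K$ itself does not vanish identically on any $V_i$ with $i \in I_B$, so that the density argument above also applies to $d_K$ and yields $\overline{C_i\setminus\z(d_K)}=\overline{C_i}$. Here the irreducibility provided by Lemma \ref{decomp} is indispensable: on an irreducible variety a finite product of polynomials vanishes identically iff at least one factor does, and by the very definition of $I_B$ no element of $D$ — and hence no factor $d_i$ — vanishes identically on such a $V_i$. Without the irreducibility of the $V_i$ this conclusion would fail, so the passage through Lemma \ref{decomp} is not a mere convenience but the heart of the argument.
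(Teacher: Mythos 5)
Your proof is correct and essentially reproduces the paper's argument: both decompose $K$ via Lemma \ref{decomp}, observe that for each cell $C_i$ either $\overline{C_i\setminus\z(d)}=\overline{C_i}$ (when $d$ does not vanish identically on the irreducible Zariski closure of $C_i$, by the dimension/empty-interior argument) or $\overline{C_i\setminus\z(d)}=\emptyset$, and then take a finite product in $D$ that kills exactly the killable cells, using irreducibility to see that the product does not vanish identically on the remaining ones. The only cosmetic difference is that you build $d_K$ from one witness per ``type (A)'' cell, whereas the paper picks finitely many $d_j$ realizing $\bigcap_{d\in D}I(d)$; the two constructions are interchangeable.
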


\begin{proof}
Write $K=C_1\cup\ldots\cup C_s$ with the properties from Lemma
\ref{decomp}. For each $i \in \{1,\ldots,s\}$ and $d \in D$ there
are two possibilities. First, $\dim(C_i \cap \z(d))<\dim(C_i)$.
Since semialgebraic homeomorphisms respect dimensions
(\cite[Theorem 2.8.8]{bcr}), we see that $C_i \cap \z(d)$ has
(relative) empty interior in $C_i$, so $\overline{C_i \setminus
\z(d)}=\overline{C_i}.$ The second possibility is $\dim(C_i \cap
\z(d))=\dim(C_i).$ Since the Zariski closure of $C_i$ is
irreducible, it equals the Zariski closure of $C_i \cap \z(d).$
But this means $d$ must vanish on the whole of $C_i$. So
$\overline{C_i \setminus \z(d)}=\emptyset.$ So we have proved that
for every $d \in D$,
\[
\overline{K \setminus \z(d)}= \bigcup_{i=1}^s \overline{C_i
\setminus \z(d)}= \bigcup_{i \in I(d)} \overline{C_i},
\]
where $I(d)=\{i \mid \dim(C_i \cap \z(d)) < \dim(C_i)\}.$ There
exist finitely many $d_1,\ldots,d_t \in D$ such that $\bigcap_{d
\in D} I(d) = \bigcap_{j=1}^t I(d_j).$ Set $d_K := \prod_{j=1}^t
d_j$ and note that $I(d_K) =\bigcap_{j=1}^t I(d_j).$ It follows
that for every $d \in D$,
\[
\overline{K \setminus \z(d_K)}=\bigcup_{i \in I(d_K)}
\overline{C_i} \subseteq \bigcup_{i \in I(d)}
\overline{C_i}=\overline{K \setminus \z(d)}.
\]
\end{proof}

\begin{lemma}
\label{clospos} For any semialgebraic set $K\subseteq\RR^n$, the
preordering
\[
\pos^{E}(K)= \{R \in D^{-1}\RR[\underline{X}] \mid R \succeq 0
\mbox{ on } K\}
\]
is closed in $\mathcal{T}_{\fin}$.
\end{lemma}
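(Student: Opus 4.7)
My plan is to reduce the statement to a closedness property in the polynomial algebra via Lemma \ref{polyrat}, then identify the intersection $\pos^E(K) \cap \RR[\underline{X}]$ explicitly as $\pos$ of a single set using Lemma \ref{dk}.

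First, by Lemma \ref{polyrat}, it suffices to show that $\pos^E(K) \cap \RR[\underline{X}]$ is closed in $\RR[\underline{X}]$. Since $\RR[\underline{X}]$ has countable dimension, $\mathcal{T}_{\fin}$ and $\mathcal{T}_{\omega}$ agree on it by (F1), so ``closed'' is unambiguous.

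Next, I would identify $\pos^E(K) \cap \RR[\underline{X}]$ explicitly. Let $d_K \in D$ be as in Lemma \ref{dk}. I claim that
\[
\pos^E(K) \cap \RR[\underline{X}] = \pos\bigl(\overline{K \setminus \z(d_K)}\bigr).
\]
For the inclusion $\subseteq$, suppose $f \in \RR[\underline{X}]$ satisfies $f \succeq 0$ on $K$. Then there exist $g \in \RR[\underline{X}]$ and $d \in D$ with $f = g/d$ in $E$ and $gd \geq 0$ on $K$. Since $\RR[\underline{X}]$ is a domain, $g = fd$, so $fd^2 \geq 0$ on $K$, hence $f \geq 0$ on $K \setminus \z(d)$, and by continuity on $\overline{K \setminus \z(d)}$. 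By Lemma \ref{dk}, $\overline{K \setminus \z(d_K)} \subseteq \overline{K \setminus \z(d)}$, so $f \geq 0$ on $\overline{K \setminus \z(d_K)}$. For the reverse inclusion, if $f \in \RR[\underline{X}]$ is nonnegative on $\overline{K \setminus \z(d_K)}$, then writing $f = (fd_K)/d_K$ we have $(fd_K)\cdot d_K = fd_K^2 \geq 0$ on $K$ (since it vanishes on $\z(d_K) \cap K$ and has the sign of $f$ elsewhere), witnessing $f \succeq 0$ on $K$.

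Finally, $\pos(X)$ is closed in $\RR[\underline{X}]$ for any subset $X \subseteq \RR^n$: for each $a \in X$, the evaluation map $f \mapsto f(a)$ is a linear functional on $\RR[\underline{X}]$, hence continuous in $\mathcal{T}_{\omega}$ by (T2), so $\{f \mid f(a) \geq 0\}$ is closed, and $\pos(X)$ is the intersection of these closed half-spaces.

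The main conceptual point, and the only place where the semialgebraicity hypothesis on $K$ is used, is in invoking Lemma \ref{dk} to produce a single $d_K$ that uniformly captures the positivity constraint coming from all choices of denominator. Once that uniform $d_K$ is in hand, the rest is a direct computation plus the standard fact that $\pos$ of any set is closed.
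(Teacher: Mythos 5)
Your proof is correct and follows essentially the same route as the paper: reduce to the polynomial algebra via Lemma \ref{polyrat}, use Lemma \ref{dk} to produce the uniform denominator $d_K$, and conclude from the closedness of $\pos$ of a point set under evaluations. The only cosmetic difference is that the paper factors the identification through Lemma \ref{posi} applied to $\x=\overline{K\setminus\z(d_K)}$ together with the equality $\pos^E(\x)=\pos^E(K)$, whereas you verify $\pos^E(K)\cap\RR[\underline{X}]=\pos\bigl(\overline{K\setminus\z(d_K)}\bigr)$ by a direct (and valid) computation.
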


\begin{proof}
Lemma \ref{dk} implies that the set $\x= \overline{K \setminus \z(d_K)}$
satisfies the assumptions of Lemma \ref{posi}, hence
$\pos^E(\x) \cap \RR[\underline{X}] = \pos(\x)$.
The set $\pos(\x)$ is closed in $\RR[\underline{X}]$, as evaluations in
points are  linear and therefore continuous.
So $\pos^E(\x)$ is closed by Lemma \ref{polyrat}.

We also have that
$\pos^E(\x)=\pos^E(K)$. Namely, if $R=\frac{f}{d}$ with $f \in
\RR[\underline{X}]$, and $d \in D$ such that $fd \in \pos(K
\setminus \z(d_K))$, then also $R=\frac{fd_K}{dd_K}$ with
$(fd_K)(dd_K) \in \pos(K)$. The opposite inclusion is trivial.
\end{proof}

The following main theorem of this section is an analogue of
Theorem \ref{main} above.

\begin{thm}
\label{main2}
Let $D\subseteq\RR[\underline{X}]\setminus\{0\}$ be a
multiplicative set containing $1$ and
$S\subseteq\RR[\underline{X}]$ finite.  Suppose there is $p\in D$
with $p\geq 1$ on $K_S$ and $kp-\sum_{i=1}^n X_i^2\geq 0$ on $K_S$
for some $k\geq 1$. Then in the topology $\mathcal{T}_{\fin}$ on
$D^{-1}\RR[\underline{X}]$ we have
$$\overline{T_S^E}=\pos^E(K_S).$$
\end{thm}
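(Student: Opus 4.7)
My plan is to deduce Theorem \ref{main2} from the already-established compact version of Schm\"udgen's theorem (the one invoked in the proof of Lemma \ref{compa}) by transferring closures along the inclusion of a finitely generated, hence countable-dimensional, subalgebra of $E$, and then exploiting the fact that $\mathcal{T}_\omega$ and $\mathcal{T}_{\fin}$ coincide on such subalgebras.

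For the inclusion $\overline{T_S^E} \subseteq \pos^E(K_S)$, observe that $T_S^E \subseteq \pos^E(K_S)$ is immediate (every element of $T_S^E$ is manifestly nonnegative at every point of $K_S$, up to a sum-of-squares denominator), while Lemma \ref{clospos} asserts that $\pos^E(K_S)$ is already $\mathcal{T}_{\fin}$-closed. Taking closures yields the desired inclusion at once.

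For the reverse inclusion, let $R \in \pos^E(K_S)$, and choose a representation $R = \frac{f}{d}$ with $f \in \RR[\underline{X}]$, $d \in D$, such that $fd \geq 0$ on $K_S$. Consider the subalgebra $A := \RR[\underline{X}, \tfrac{1}{p}]$; since $A$ is finitely generated as an $\RR$-algebra, it has countable vector space dimension. The growth hypothesis on $p$ is exactly condition $(*)$ of \cite[Theorem 5.1]{sch} (as pointed out in the proof of Lemma \ref{compa}), so Schm\"udgen's theorem applied to the polynomial $fd \ge 0$ on $K_S$ shows that $fd$ lies in the $\mathcal{T}_\omega(A)$-closure of $T_S^A$, the preordering generated by $S$ in $A$. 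Because $A$ has countable dimension, (F1) gives $\mathcal{T}_\omega(A) = \mathcal{T}_{\fin}(A)$. The natural inclusion $A \hookrightarrow E$ is linear, hence by (F4) continuous from $\mathcal{T}_{\fin}(A)$ to $\mathcal{T}_{\fin}(E)$, and it sends $T_S^A$ into $T_S^E$; therefore $fd$ lies in the $\mathcal{T}_{\fin}(E)$-closure of $T_S^E$.

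To finish, multiply by $\tfrac{1}{d^2} \in \sum E^2$. Multiplication by $\tfrac{1}{d^2}$ is $\RR$-linear on $E$, hence $\mathcal{T}_{\fin}$-continuous by (F4), and it maps $T_S^E$ into itself (since $\sum E^2 \cdot T_S^E \subseteq T_S^E$). Applied to the convergence from the previous paragraph this produces $R = \tfrac{fd}{d^2} \in \overline{T_S^E}$, as required. The main obstacle, such as it is, is purely notational: one must track which topology is being used on which algebra and invoke (F1) and (F4) at the right places to move freely between $\mathcal{T}_\omega(A)$, $\mathcal{T}_{\fin}(A)$, and $\mathcal{T}_{\fin}(E)$. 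No genuinely new analytic input beyond \cite[Theorem 5.1]{sch} and the listed properties (F1)--(F4) is needed.
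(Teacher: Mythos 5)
Your proposal is correct and follows essentially the same route as the paper: the inclusion $\overline{T_S^E}\subseteq\pos^E(K_S)$ via Lemma \ref{clospos}, and the reverse inclusion by applying \cite[Theorem 5.1]{sch} to $fd$ inside the countable-dimensional subalgebra $\RR[\underline{X},\tfrac1p]$ and then multiplying by $\tfrac{1}{d^2}$, which is continuous by (F4). Your explicit bookkeeping with (F1) and (F4) to pass from $\mathcal{T}_\omega$ on the subalgebra to $\mathcal{T}_{\fin}$ on $E$ is exactly the mechanism the paper uses (and spells out in the remark following the theorem).
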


\begin{proof} First take  $R\in \pos^E(K_S)$, so $R=\frac{f}{d}$ for some $f\in\RR[\underline{X}],d\in D$, with
$fd \ge 0$ on $K_S$. We apply \cite[Theorem 5.1]{sch} and find
$fd$ lying in the closure of $T_S^E$ in a finite dimensional
subspace of $D^{-1}\RR[\underline{X}]$. So $fd$ and therefore $R$
belong to $\overline{T_S^E}.$

Since $T_S^E \subseteq \pos^E(K_S) \subseteq \overline{T_S^E}$, it
remains to show that $\pos^E(K_S)$ is closed in
$\mathcal{T}_{\fin}$. This is Lemma \ref{clospos}.
\end{proof}

\begin{remark}
We can also deduce Theorem \ref{main2} from  Theorem \ref{main}
and Lemma \ref{clospos}. Namely,
\[
\bigcup_{d \in D} \pos^{\RR[\underline{X}]_d}(K_S) \subseteq
\bigcup_{d \in D} \pos^{\RR[\underline{X}]_{pd}}(K_S) \subseteq
\bigcup_{d \in D} \overline{T_S^{\RR[\underline{X}]_{pd}}} \subseteq
\bigcup_{d \in D} \overline{T_S^{\RR[\underline{X}]_d}},
\]
by application of Theorem \ref{main} to the algebra
$\RR[\underline{X}]_{pd}$; we use that $\mathcal{T}_{\omega}$ and
$\mathcal{T}_{\fin}$ coincide in the countable dimensional case.
Now, using Lemma \ref{clospos},
\[
\overline{T_S^E} \subseteq
\pos^E(K_S) \subseteq
\bigcup_{d \in D} \pos^{\RR[\underline{X}]_d}(K_S) \subseteq
\bigcup_{d \in D} \overline{T_S^{\RR[\underline{X}]_d}} \subseteq
\overline{T_S^E}.
\]
\end{remark}

The following is a counterpart to Corollary \ref{rat}.
\begin{cor}
$\sum\RR(\underline{X})^2$ is closed in $\RR(\underline{X})$ with
respect to $\mathcal{T}_{\fin}$.
\end{cor}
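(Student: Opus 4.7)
The plan is to identify $\sum\RR(\underline{X})^2$ with $\pos^E(\RR^n)$, where $E = \RR(\underline{X}) = D^{-1}\RR[\underline{X}]$ for $D = \RR[\underline{X}]\setminus\{0\}$. Once this identification is established, the corollary is immediate: $\pos^E(\RR^n)$ is closed in $\mathcal{T}_{\fin}$ by Lemma \ref{clospos} applied to the semialgebraic set $K = \RR^n$.

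For the identification itself, the inclusion $\sum E^2 \subseteq \pos^E(\RR^n)$ is routine: any finite sum of squares in $E$, after clearing a common denominator, can be represented as $f/d^2$ with $f\in\sum\RR[\underline{X}]^2$, so $f\cdot d^2 \ge 0$ on $\RR^n$. For the reverse inclusion, I would first note that the density hypothesis of Lemma \ref{posi} is trivially satisfied here: for every nonzero polynomial $d$, the zero set $\z(d)$ has empty interior in $\RR^n$, so $\RR^n\setminus\z(d)$ is dense in $\RR^n$. Given $R \in \pos^E(\RR^n)$, Lemma \ref{posi} then allows us to write $R=a/b$ in lowest terms with $ab \ge 0$ on $\RR^n$. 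Invoking Artin's solution to Hilbert's 17th problem, the polynomial $ab$ is a sum of squares in $\RR(\underline{X})$, and therefore so is $R = ab/b^2$.

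The only substantive input from outside this paper is Artin's theorem; the rest is bookkeeping through lemmas already established. As an alternative route, one could first apply Theorem \ref{main2} (with $S=\emptyset$ and $p = 1+\sum_i X_i^2 \in D$) to obtain $\overline{\sum E^2} = \pos^E(\RR^n)$ in $\mathcal{T}_{\fin}$, and then collapse the closure by the same appeal to Artin; but the direct route via Lemma \ref{clospos} is a touch shorter. There is no real obstacle beyond citing Artin correctly.
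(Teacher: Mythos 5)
Your proposal is correct and follows essentially the same route as the paper: the paper's one-line proof is precisely the observation that, by Artin's solution to Hilbert's 17th problem, $\sum\RR(\underline{X})^2=\pos^E(\RR^n)$, which is closed by Lemma \ref{clospos}. You have merely filled in the bookkeeping (the easy inclusion, the density hypothesis of Lemma \ref{posi}, and writing $R=ab/b^2$) that the paper leaves implicit.
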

\begin{proof}
Every globally nonnegative rational function is a sum of squares
of rational functions.
\end{proof}

\subsection*{Example} In $\RR(\underline{X};\emptyset)$,
the sums of squares are not closed. It is a long known fact that
there are nonnegative polynomials which can not be written as a
sum of squares of rational functions without poles. See for
example \cite[Lemma 1.1]{s} or \cite[Paragraph 6]{r2}. But as we
have seen, these polynomials belong to the closure of the sums of
squares in $\RR(\underline{X};\emptyset)$.

\section{Extension to finitely generated $\RR$-algebras}

The aim of this section is to extend Theorems \ref{main}, \ref{mainmom} and \ref{main2}
from polynomial rings to finitely generated $\RR$-algebras.

Let $A$ be a finitely generated $\RR$ algebra. A \textit{character} of $A$
is a unital $\RR$-algebra homomorphisms from $A$ to $\RR$. Write
$V(A)$ for the set of all characters of $A$. Elements from $A$
define functions on $V(A)$ by $a(\chi):=\chi(a).$ We equip $V(A)$
with the coarsest topology making all these functions continuous.
One can embed $V(A)$ into some $\RR^n$, by choosing generators
$x_1,\ldots,x_n$ of $A$ and sending $\chi\mapsto
(\chi(x_1),\ldots, \chi(x_n))$. In that case, the topology on
$V(A)$ coincides with the usual topology from $\RR^n$ (see for
example \cite[Section 5.7]{mar}).

Let $D$ be a multiplicative subset of $A \setminus \{0\}$ which
contains $1$. There is a canonical homomorphism $\iota\colon
A\rightarrow F=D^{-1}A$, which however is not necessarily
one-to-one. For every $d \in A$ write $\z_A(d)=\{\chi \in V(A)
\mid \chi(d) =0\}$. Note that the set $V(A) \setminus \bigcup_{d \in D} \z_A(d)$
consists of all characters  of $A$ which can be extended (via $\iota$) to a
character of $F$.

Let $S$ be a finite subset of $A$. Write $K_S^A=\{\chi \in V(A)
\mid \chi(S) \ge 0\}$. Let $T_S^F$ be the preordering in
$F$ generated by the set $\iota(S)$.

For every subset $\x$ of $V(A)$ write $\pos(\x)$ for the set of all
$a \in A$ such that $\chi(a) \ge 0$ for every $\chi \in \x$ and write
$\pos^F(\x)$ for the set of all elements $R \in F$ which have
a representation $R=\frac{a}{d}$ with $a \in A$, $d \in D$ and $ad \in \pos(\x)$.

\begin{thm}
\label{mainfg}
Assume that $A,D,S,F$ are as above. Also assume that there is an element $p \in D$
such that $p-1 \in \pos(K_S^A)$ and $kp-\sum_{i=1}^n x_i^2  \in \pos(K_S^A)$
for some integer $k \ge 1$ and generators $x_1,\ldots,x_n$ of $A$. Then
\begin{enumerate}
\item The closure of $T_S^F$ in $(F,\mathcal{T}_{\omega})$ is
$\pos^F(K_S^A \setminus \bigcup_{d \in D} \z_A(d))$.
\item The closure of $T_S^F$ in $(F,\mathcal{T}_{\fin})$ is $\pos^F(K_S^A)$.
\item For every linear functional $L$ on $F$ such that $L(T_S^F) \ge 0$, there
exists a measure $\mu$ on $\overline{K_S^A \setminus \bigcup_{d \in D}\z_A(d)}$ such that
\[
L\left(\frac{f}{d}\right) = \int \frac{f}{d} \ d \mu
\quad
\text{for every}
\quad
\frac{f}{d} \in F.
\]
\end{enumerate}
\end{thm}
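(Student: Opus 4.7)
The plan is to reduce each of the three assertions to its polynomial-ring counterpart (Theorems~\ref{main}, \ref{main2}, \ref{mainmom}) by lifting along the surjection $\pi\colon \RR[\underline{X}] \to A$ determined by the chosen generators $x_1,\ldots,x_n$. Fix generators $f_1,\ldots,f_r$ of $\tilde{I}:=\ker \pi$; for each $d\in D$ choose a nonzero preimage $\tilde{d}\in \pi^{-1}(d)$; and let $\tilde{D}$ be the multiplicative set generated by these lifts. Put $\tilde{E}:=\tilde{D}^{-1}\RR[\underline{X}]$, so that $\pi$ extends uniquely to a surjection $\tilde{\pi}\colon \tilde{E}\to F$ with kernel $\tilde{D}^{-1}\tilde{I}$. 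Lift $S$ to $\tilde{S}\subseteq \RR[\underline{X}]$, set $\tilde{S}':=\tilde{S}\cup\{\pm f_1,\ldots,\pm f_r\}$, and lift $p$ to some $\tilde{p}\in \tilde{D}$. Under the embedding $V(A)\hookrightarrow \RR^n$ induced by the generators, $V(A)$ is identified with the real variety $V(\tilde{I})$, $K_S^A$ with $K_{\tilde{S}'}=V(\tilde{I})\cap K_{\tilde{S}}$, and $\z_A(d)$ with $V(\tilde{I})\cap \z(\tilde{d})$. Consequently the growth hypotheses on $p$ transfer to the analogous hypotheses on $\tilde{p}$ on $K_{\tilde{S}'}$, so Theorems~\ref{main}, \ref{main2}, \ref{mainmom} are applicable to the data $(\RR[\underline{X}], \tilde{D}, \tilde{S}', \tilde{p})$.

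Two compatibilities will be used repeatedly. First, $\tilde{\pi}(T_{\tilde{S}'}^{\tilde{E}})\subseteq T_S^F$: a generator $e^2 \tilde{g}_1^{\nu_1}\cdots \tilde{g}_s^{\nu_s}$ involving some $\pm f_\ell$ maps to $0$, while the remaining generators map to generators of $T_S^F$. Second, $\tilde{\pi}$ is continuous in both topologies of interest, by (T2) for $\mathcal{T}_{\omega}$ and by (F4) for $\mathcal{T}_{\fin}$.

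For assertion (1), the easy inclusion $\overline{T_S^F}\subseteq \pos^F(K_S^A\setminus \bigcup_{d\in D}\z_A(d))$ follows because each character of $A$ in $K_S^A\setminus \bigcup_{d\in D}\z_A(d)$ extends to a character of $F$, which by (T2) is a continuous functional nonnegative on $\overline{T_S^F}$. Conversely, given $R=a/d\in \pos^F(K_S^A\setminus \bigcup_{d\in D}\z_A(d))$, lift to $\tilde{R}=\tilde{a}/\tilde{d}\in \tilde{E}$; then $\tilde{a}\tilde{d}\ge 0$ on $K_{\tilde{S}'}\setminus \bigcup_{\tilde{d}\in \tilde{D}}\z(\tilde{d})$, so Theorem~\ref{main} gives $\tilde{R}\in \overline{T_{\tilde{S}'}^{\tilde{E}}}$ in $\mathcal{T}_{\omega}$, and applying $\tilde{\pi}$ with the two compatibilities above yields $R\in \overline{T_S^F}$. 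Assertion (2) is strictly analogous, invoking Theorem~\ref{main2} and (F4); closedness of $\pos^F(K_S^A)$ in $\mathcal{T}_{\fin}$ is obtained by transporting Lemma~\ref{clospos} through $\tilde{\pi}$.

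For assertion (3), set $\tilde{L}:=L\circ \tilde{\pi}\colon \tilde{E}\to \RR$. The first compatibility gives $\tilde{L}(T_{\tilde{S}'}^{\tilde{E}})\ge 0$, so Theorem~\ref{mainmom} produces a positive measure $\tilde{\mu}$ on $\overline{K_{\tilde{S}'}\setminus \bigcup_{\tilde{d}\in \tilde{D}}\z(\tilde{d})}$ representing $\tilde{L}$. Since $K_{\tilde{S}'}\subseteq V(\tilde{I})$ and $V(\tilde{I})$ is closed in $\RR^n$, the support of $\tilde{\mu}$ lies in $V(\tilde{I})$; pushing forward along the homeomorphism $V(\tilde{I})\xrightarrow{\sim} V(A)$ yields a measure $\mu$ on $\overline{K_S^A\setminus \bigcup_{d\in D}\z_A(d)}$. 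For $a/d\in F$ with lift $\tilde{a}/\tilde{d}\in \tilde{E}$, the rational function $\tilde{a}/\tilde{d}$ on $V(\tilde{I})$ corresponds to $a/d$ on $V(A)$, giving $L(a/d)=\int a/d\,d\mu$ as required. The main piece of work is the careful bookkeeping to check that adjoining $\pm f_\ell$ preserves the growth hypothesis and that the preorderings and their closures are correctly transported through $\tilde{\pi}$; no new analytic ideas beyond those already developed in Sections~\ref{proof}--\ref{variant} are needed.
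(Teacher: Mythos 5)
Your proposal follows essentially the same route as the paper: present $A$ as $\RR[\underline{X}]/I$, adjoin $\pm f_1,\ldots,\pm f_r$ to the lifted $S$, lift $p$ and $D$, apply Theorems \ref{main}, \ref{main2}, \ref{mainmom} upstairs, and push the conclusions down along $\tilde{\pi}$. Your treatment of assertions (1) and (3) is fine (for (1) you even avoid the nontrivial half of the paper's Lemma \ref{quot2} by proving the easy inclusion directly via characters of $F$ and using only continuity of $\tilde{\pi}$ for the hard one).

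The one real gap is in assertion (2), at the phrase ``closedness of $\pos^F(K_S^A)$ in $\mathcal{T}_{\fin}$ is obtained by transporting Lemma \ref{clospos} through $\tilde{\pi}$.'' What you need is that $\pos^F(K_S^A)=\tilde{\pi}\bigl(\pos^{\tilde{E}}(K_{\tilde{S}'})\bigr)$ is \emph{closed}, and the image of a closed set under a continuous surjection is not closed in general; continuity of $\tilde{\pi}$ only yields $\tilde{\pi}(\overline{C})\subseteq\overline{\tilde{\pi}(C)}$, which is the wrong direction here. The correct statement is that $\tilde{\pi}$ carries closed convex cones containing $\ker\tilde{\pi}$ to closed sets (note that $\pos^{\tilde{E}}(K_{\tilde{S}'})\supseteq\ker\tilde{\pi}=\tilde{D}^{-1}I$ since elements of $I$ vanish on $K_{\tilde{S}'}\subseteq V(I)$); in $\mathcal{T}_{\fin}$ this requires an argument — the paper's Lemma \ref{quot2} reduces to finite-dimensional subspaces $V$ with $\tilde{\pi}(V)=W$, checks $\tilde{\pi}(C\cap V)=\tilde{\pi}(C)\cap W$ using $\ker\tilde{\pi}\subseteq C$, and invokes the separation-theorem argument for the $\mathcal{T}_{\omega}$ case on $V$. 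You should either supply this lemma or prove the closedness of $\pos^F(K_S^A)$ directly by redoing Lemmas \ref{polyrat}--\ref{clospos} over $A$.
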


Note that the set $\pos^F(K_S^A \setminus \bigcup_{d \in D} \z_A(d))$
consists of all $R \in F$ such that $\tau(R) \ge 0$ for every character $\tau$
of $F$ which satisfies $\tau(\iota(S)) \ge 0$.

\begin{proof}
Let $\pi \colon \RR[\underline{X}] \to A$ be the unital $\RR$-algebra
homomorphism defined by $X_i \to x_i$ for $i=1,\ldots,n$ and
$\pi^\ast \colon V(A) \to \RR^n$ the corresponding embedding
defined by $\chi \mapsto (\chi(x_1),\ldots,\chi(x_n))$.
If $f_1,\ldots,f_t$ are generators of the ideal $I=\ker \pi$,
then $\pi^\ast(V(A))=K_{\{\pm f_1,\ldots,\pm f_t\}}$.
Similarly, for a given set $S=\{g_1,\ldots,g_m\} \subseteq A$ take
elements $\tilde{g}_i\in\RR[\underline{X}]$ such that
$\pi(\tilde{g}_i)=g_i$ and note that
\[
\pi^\ast(K^A_S)=K_{\widetilde{S}}
\quad \text{ where } \quad
\widetilde{S}:=\left\{\tilde{g}_1,\ldots,\tilde{g}_m,\pm\tilde{f}_1,\ldots,\pm\tilde{f}_t\right\}.
\]
Let $\widetilde{D}:=\pi^{-1}(D)$ denote the
localization $\widetilde{D}^{-1}\RR[\underline{X}]$ by $E$. $\pi$
extends uniquely to a homomorphism $\tilde{\pi}\colon E\rightarrow
F$, making the following diagram commutative:
\[
\xymatrix{ E \ar@{->}^{\tilde{\pi}}[r] & F \\ \RR[\underline{X}]
\ar@{->}^{\pi}[r] \ar@{->}[u] & A \ar@{->}^{\iota}[u] }
\]

The preorderings $T_{\widetilde{S}}\subseteq\RR[\underline{X}]$
and $T_{\widetilde{S}}^E\subseteq E$ do not depend
on the specific choice of the $\tilde{f}_i,\tilde{g}_i$
but only on $I$ and $S$. We have
\[
\tilde{\pi}(T_{\widetilde{S}}^E)=T_S^F\tag{*}.
\]
Now suppose $D$ contains an element $p$ with the properties
required in Theorem \ref{mainfg}. Any preimage
$\tilde{p}$ of $p$ under $\pi$ will have the corresponding
property with respect to $K_{\widetilde{S}}.$

 For any linear functional $L$ on $F$
with $L(T_S^F)\geq 0$, the functional
$\widetilde{L}:=L\circ\tilde{\pi}$ fulfills
$\widetilde{L}(T_{\widetilde{S}}^E)\geq 0,$ and is therefore
integration on
$\overline{K_{\widetilde{S}}\setminus\bigcup_{\tilde{d}\in\widetilde{D}}
\z(\tilde{d})}$ by Theorem \ref{mainmom}.
It follows that $L$ is an integration on
\[
\overline{K_S^A\setminus\bigcup_{d\in D}\z(d)})=
(\pi^\ast)^{-1}(\overline{K_{\widetilde{S}}\setminus\bigcup_{\tilde{d}\in\widetilde{D}}
\z(\tilde{d})}).
\]
So we have proved assertion (3) of Theorem \ref{mainfg}.
Assertions (1) and (2) of Theorem \ref{mainfg} follow from Theorems \ref{main}
and \ref{main2}, observations
\[
\tilde{\pi}(\pos^E(K_{\widetilde{S}} \setminus
\bigcup_{\tilde{d} \in \widetilde{D}} \z(\tilde{d})))=
\pos^F(K_S^A \setminus \bigcup_{d \in D} \z_A(d)),
\]
\[
\tilde{\pi}(\pos^E(K_{\widetilde{S}}))=\pos^F(K_S^A)
\]
and (*) and from the following Lemma.
\end{proof}

\begin{lemma}
\label{quot2} Let $E$ and $F$ be $\RR$-vector spaces, $\tilde{\pi} \colon
E \to F$ a linear mapping which is onto and $C$ a convex cone in
$E$ which contains $\ker \tilde{\pi}$. Then
\[
\tilde{\pi}(\overline{C})=\overline{\tilde{\pi}(C)},
\]
where either $E,F$ are both equipped with $\mathcal{T}_{\omega}$ or
both with $\mathcal{T}_{\fin}$.
\end{lemma}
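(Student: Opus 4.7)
The plan is to prove both inclusions separately. The easy direction $\tilde{\pi}(\overline{C}) \subseteq \overline{\tilde{\pi}(C)}$ is immediate from the continuity of $\tilde{\pi}$, which holds in both topologies by (T2) and (F4) respectively.

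For the reverse inclusion $\overline{\tilde{\pi}(C)} \subseteq \tilde{\pi}(\overline{C})$, I would first record the crucial identity $\tilde{\pi}^{-1}(\tilde{\pi}(C)) = C$. The preimage equals $C + \ker \tilde{\pi}$ by standard set-theoretic reasoning, and since $\ker \tilde{\pi} \subseteq C$ and every convex cone is closed under addition (because $c_1+c_2 = 2\cdot\tfrac{c_1+c_2}{2} \in C$), this set is contained in $C+C \subseteq C$.

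The second step is to show that $\tilde{\pi}$ is an \emph{open} map in both topologies. Once this is in place, the proof concludes cleanly: given $y \in \overline{\tilde{\pi}(C)}$, pick any preimage $x_0$ under the surjection $\tilde{\pi}$. For every open neighborhood $U$ of $x_0$, the image $\tilde{\pi}(U)$ is an open neighborhood of $y$, so meets $\tilde{\pi}(C)$; any $u \in U$ with $\tilde{\pi}(u)\in\tilde{\pi}(C)$ lies in $\tilde{\pi}^{-1}(\tilde{\pi}(C)) = C$, so $U\cap C \neq \emptyset$. Hence $x_0 \in \overline{C}$ and $y \in \tilde{\pi}(\overline{C})$. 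Openness of $\tilde{\pi}$ with respect to $\mathcal{T}_{\omega}$ is a routine verification: if $N \subseteq E$ is convex, symmetric, and absorbent, then $\tilde{\pi}(N)$ inherits each of these properties (using surjectivity for absorbency), so $\tilde{\pi}$ maps the canonical neighborhood base of $0$ in $E$ into one for $F$.

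For $\mathcal{T}_{\fin}$, openness requires more care. Given an open set $U \subseteq E$ and a finite-dimensional subspace $V \subseteq F$, I pick $y \in \tilde{\pi}(U) \cap V$ and write $y = \tilde{\pi}(u)$ with $u \in U$. Choosing a linear section $\sigma \colon V \to E$ and setting $W := \sigma(V) + \RR u$ gives a finite-dimensional subspace of $E$ containing $u$ with $\tilde{\pi}(W) = V$. By the very definition of $\mathcal{T}_{\fin}$, $U \cap W$ is open in $W$ in its usual topology, and the surjective linear map $\tilde{\pi}|_W \colon W \to V$ between finite-dimensional spaces is open, so $\tilde{\pi}(U\cap W)$ is an open neighborhood of $y$ in $V$ contained in $\tilde{\pi}(U)\cap V$. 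The main obstacle is precisely this openness in $\mathcal{T}_{\fin}$: since this topology is not locally convex in general, the standard quotient-space arguments used for $\mathcal{T}_{\omega}$ do not apply directly, and one must reduce to a finite-dimensional slice via a section.
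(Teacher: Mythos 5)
Your proof is correct, but it follows a genuinely different route from the paper's. You reduce everything to two facts: the saturation identity $\tilde{\pi}^{-1}(\tilde{\pi}(C))=C$ (valid because $\ker\tilde{\pi}\subseteq C$ and a convex cone is closed under addition) and the openness of $\tilde{\pi}$ in each topology; the reverse inclusion then follows by a purely topological argument that works verbatim in both cases. The paper instead treats the two topologies by different methods. For $\mathcal{T}_{\omega}$ it argues by duality: by the separation theorem, $e\in\overline{C}$ iff $L'(e)\ge 0$ for all functionals $L'$ with $L'(C)\ge 0$, and every such $L'$ kills the subspace $\ker\tilde{\pi}\subseteq C$ and hence factors through $\tilde{\pi}$. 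For $\mathcal{T}_{\fin}$ it shows that $\tilde{\pi}(C)$ is closed whenever $C$ is a closed cone containing $\ker\tilde{\pi}$, by intersecting with finite-dimensional subspaces, proving $\tilde{\pi}(C\cap V)=\tilde{\pi}(C)\cap\tilde{\pi}(V)$, and invoking the already-established $\mathcal{T}_{\omega}$ case in finite dimensions. Your approach buys uniformity and avoids Hahn--Banach separation entirely; the price is that you must verify openness of $\tilde{\pi}$, which for $\mathcal{T}_{\omega}$ is the routine check that images of convex, symmetric, absorbent sets retain these properties (absorbency needing surjectivity), and for $\mathcal{T}_{\fin}$ requires the finite-dimensional slice $W=\sigma(V)+\RR u$ built from a linear section --- a correct and somewhat slicker device than the paper's. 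The paper's version of the $\mathcal{T}_{\fin}$ argument has the mild advantage of recycling the $\mathcal{T}_{\omega}$ case and of isolating the reusable fact that images of closed cones containing the kernel are closed; yours has the advantage that the closure computation is done once for both topologies.
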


\begin{proof}
Suppose that $E,F$ are  equipped with $\mathcal{T}_{\omega}$.
The inlusion $\tilde{\pi}(\overline{C}) \subseteq \overline{\tilde{\pi}(C)}$ follows from the fact
that $\tilde{\pi}$ is continuous in $\mathcal{T}_{\omega}$. To prove the
opposite inclusion, pick $\tilde{\pi}(e) \in \overline{\tilde{\pi}(C)}$. Then $L(\tilde{\pi}(e)) \ge 0$
for every linear functional $L$ on $F$ such that $L(\tilde{\pi}(C)) \ge 0$.
Every linear functional $L'$ on $E$ such that $L'(C) \ge 0$ factors through $\tilde{\pi}$
because $\ker \tilde{\pi} \subseteq C$. It follows that $L'(e) \ge 0$ for every
linear functional $L'$ on $E$ such that $L'(C) \ge 0$. So, $e \in \overline{C}$
which implies that $\tilde{\pi}(e) \in \tilde{\pi}(\overline{C})$.

Suppose now that $E,F$ are  equipped with $\mathcal{T}_{\fin}$.
 First note that by the same argument as in Lemma \ref{finclos},
the closure of a convex cone with respect to $\mathcal{T}_{\fin}$ is again
a convex cone. Now the formula $\tilde{\pi}(\overline{C}) \subseteq \overline{\tilde{\pi}(C)}$
follows from the fact that $\tilde{\pi}$ is continuous. To prove the opposite
inclusion it suffices to show that $\tilde{\pi}(C)$ is closed for every
closed cone $C$ containing $\ker \tilde{\pi}$. Suppose that $C$ is such a
cone and pick a finite-dimensional subspace $W$ of $F$. Let $V$ be
a finite-dimensional subspace of $E$ such that $\tilde{\pi}(V)=W$. By
assumption, $C \cap V$ is closed in $V$ in the Euclidean topology,
hence also in the finest locally convex topology. Since $C \cap V$
is a closed cone in $V$ which contains $V \cap \ker \tilde{\pi} = \ker
\tilde{\pi}|_V$, it follows by the first paragraph (applied to $\tilde{\pi}|_V \colon
V \to W$ instead of  $\tilde{\pi} \colon E \to F$) that $\tilde{\pi}(C \cap V)$ is
closed in $W=\tilde{\pi}(V)$. It remains to show that $\tilde{\pi}(C \cap
V)=\tilde{\pi}(C) \cap \tilde{\pi}(V)$. Pick $\tilde{\pi}(e) \in \tilde{\pi}(C) \cap \tilde{\pi}(V)$. We
have that $e=c+i=v+j$ for some $c \in C$, $v \in V$ and $i,j \in
\ker \tilde{\pi}$. Since $\ker \tilde{\pi} \subseteq C$, $c+i-j=v$ belongs to $C
\cap V$. It follows that $\tilde{\pi}(e)=\tilde{\pi}(v)$ belongs to $\tilde{\pi}(C \cap
V)$. The opposite inclusion is clear.
\end{proof}

It would be interesting to know whether Theorem \ref{mainfg} also holds
for the algebra $A=\mathcal{O}(\RR^n)$ of analytic functions on $\RR^n$. In this case we can use
\cite[Theorem 2.4]{aab}  instead of  \cite[Theorem 5.1]{sch}, so the assumption
$p \in D$ for certain $p$ may not be necessary.

\end{document}